\theoremstyle{plain}
\numberwithin{equation}{section}
\DeclareMathOperator{\sgn}{sgn}
\DeclareMathOperator{\Int}{Int}
\DeclareMathOperator{\dist}{dist}
\newtheorem{theorem}{Theorem}[section]
\newtheorem{cor}[theorem]{Corollary}
\newtheorem{lemma}[theorem]{Lemma}
\newtheorem{prop}[theorem]{Proposition}
\theoremstyle{definition}
\newtheorem{definition}[theorem]{Definition}
\newtheorem{example}[theorem]{Example}
\begin{document}
	
	\title[Sub-Algebras of $\mathcal{H}ol(\Gamma\cup\Int(\Gamma))$ and zeros of Holomorphic functions]{Geometry of sub-algebras of $\mathbf{\mathcal{H}ol(\Gamma\cup\Int(\Gamma))}$ and zeros of Holomorphic functions}
	
	\author[Bose, Roy, Sain]{Babhrubahan Bose, Saikat Roy, Debmalya Sain}
	\newcommand{\acr}{\newline\indent}

	\address[Bose]{Department of Mathematics\\ Indian Institute of Science\\ Bengaluru 560012\\ Karnataka \\INDIA\\ }
	\email{babhrubahanb@iisc.ac.in}

	\address[Roy]{Department of Mathematics\\ National Institute of Technology Durgapur\\ Durgapur 713209\\ West Bengal\\ INDIA}
	\email{saikatroy.cu@gmail.com}
	
	\address[Sain]{Department of Mathematics\\ Indian Institute of Science\\ Bengaluru 560012\\ Karnataka \\INDIA\\ }
	\email{saindebmalya@gmail.com}
	
	\thanks{The research of Dr. Debmalya Sain is sponsored by Dr. D. S. Kothari Post-doctoral Fellowship under the mentorship of Professor Gadadhar Misra. The research of Mr. Saikat Roy is supported by CSIR MHRD in form of Junior Research Fellowship under the supervision of Professor Satya Bagchi. The research of Mr. Babhrubahan Bose is funded by PMRF Research Fellowship under the supervision of Professor Gadadhar Misra and Professor Apoorva Khare.} 
	
	\subjclass[2010]{Primary 46B20, Secondary 30J10, 30J99}
	\keywords{Nowhere vanishing, point separating sub-algebra; Extreme points; Exposed points; Birkhoff-James orthogonality; Smooth points; Holomorphic functions; Zeros of holomorphic functions}
	
	\begin{abstract}
		We study $  \mathcal{H}ol(\Gamma\cup\Int(\Gamma)) $, the normed algebra of all holomorphic functions defined on some simply connected neighborhood of a simple closed curve $\Gamma$ in $\mathbb{C} $, equipped with the supremum norm on $ \Gamma $. We explore the geometry of nowhere vanishing, point separating sub-algebras of $ \mathcal{H}ol(\Gamma \cup \Int(\Gamma)) $. We characterize the extreme points and the exposed points of the unit balls of the said sub-algebras for $\Gamma$ analytic. We also characterize the smoothness of an element in these sub-algebras by using Birkhoff-James orthogonality techniques without any restriction on $\Gamma$. As a culmination of our study, we assimilate the geometry of the aforesaid sub-algebras with some classical concepts of complex analysis and establish a connection between Birkhoff-James orthogonality and zeros of holomorphic functions.
	\end{abstract}
	
	\maketitle

	\section{Introduction}
	
	The aim of the present article is to study the geometry of a family of normed algebras and to establish a connection between Birkhoff-James orthogonality and zeros of holomorphic functions. The mentioned normed algebras consist of holomorphic functions defined on any simply connected neighborhood of a simple closed curve in $\mathbb{C}$. We shall let $\mathcal{H}ol(K)$ denote the normed algebra of all continuous functions on a compact set $K\subset \mathbb{C}$ that can be extended to a holomorphic function on some neighborhood of $K$, equipped with the supremum norm on $K$. In the same spirit, we consider the normed algebra $\mathcal{H}ol(\Gamma\cup\Int(\Gamma))$ for a simple closed curve $\Gamma\subset\mathbb{C}$, where $\Int(\Gamma)$ denotes the simply connected region enclosed by $\Gamma$, i.e.,
	\begin{equation*}
		\mathcal{H}ol(\Gamma\cup\Int(\Gamma)):=\{f:\Gamma\cup\Int(\Gamma)\to\mathbb{C}:\exists~U_f\supset\Gamma\cup\Int(\Gamma)~\text{open},~f~\text{holomorphic~on}~U_f\},
	\end{equation*}
	with the norm defined as:
	\begin{equation*}
		\|f\|:=\sup\limits_{z\in\Gamma}|f(z)|,\;f\in\mathcal{H}ol(\Gamma\cup\Int(\Gamma)).
	\end{equation*}
	
	\noindent Clearly, if $f\in\mathcal{H}ol(\Gamma\cup\Int(\Gamma))$, then $f^{(n)}\in\mathcal{H}ol(\Gamma\cup\Int(\Gamma))$ for every $n\in\mathbb{N}$, where $f^{(n)}$ denotes the $n$-th order derivative of $f$.\par
	A sub-algebra $\mathfrak{A}$ of $\mathcal{H}ol(\Gamma\cup\Int(\Gamma))$ is said to be nowhere vanishing and point separating at $ \Gamma $ if for every $z\in\Gamma$, there exists $f\in\mathfrak{A}$ such that $f(z)\neq 0$, and for every $z_1,z_2\in\Gamma$, there exists $g\in\mathfrak{A}$ such that $g(z_1)\neq g(z_2)$. Throughout this article, we call these sub-algebras as nowhere vanishing point separating sub-algebras of $ \mathcal{H}ol(\Gamma \cup \Int(\Gamma)) $, without any ambiguity.\par
	For a given choice of $\Gamma\subset\mathbb{C}$, define:
	\begin{align}\label{everywhere norming}
		\mathcal{J}(\Gamma):=\{f\in\mathcal{H}ol(\Gamma\cup\Int(\Gamma)):|f(z)|=\|f\|\;\forall z\in\Gamma\},
	\end{align}
	and for a given $f\in\mathcal{H}ol(\Gamma\cup\Int(\Gamma))$, set:
	\begin{align}
		M_f & :=\{z\in\Gamma:|f(z)|=\|f\|\}\label{norming points},\\
		\mathcal{Z}_f & :=\{z\in\Gamma\cup\Int(\Gamma):f(z)=0\}\label{zeros}.
	\end{align}
	For example, if $\Gamma=\{z\in\mathbb{C}:|z|=r\}$ for some $r>0$, then $f\in\mathcal{J}(\Gamma)$ if and only if $f$ is constant or
	\begin{align*}
		f(z)=c\prod\limits_{k=1}^n\left(\frac{z-ra_k}{r-\overline{a_k}z}\right),\;z\in\Gamma\cup\Int(\Gamma),
	\end{align*}
	for some $c\in\mathbb{C}\backslash\{0\}$, $n\in\mathbb{N}$ and $a_k\in\mathbb{D}$ for $1\leq k\leq n$, where $\mathbb{D}$ denotes the open unit disc in $\mathbb{C}$.\par 
	For a given $z\in\mathbb{C}$, the real and the imaginary parts of $z$ are denoted by $\Re(z)$ and $\Im(z)$, respectively. If additionally, $z\neq0$, let $\arg(z)$ denote the real number $\theta\in[0,2\pi)$ such that $z=|z|e^{i\theta}$. Also, the sign function $\sgn:\mathbb{C}\to\mathbb{C}$ is defined by 
	\begin{align*}
		\sgn(z):=
		\begin{cases}
			\frac{z}{|z|},~z\neq0,\\
			0,~z=0.
		\end{cases}
	\end{align*}
	\par
	Given a normed linear space $X$, let $B_X$ and $S_X$ denote the closed unit ball and the unit sphere of $X$ respectively, i.e.,
	\begin{align*}
		B_X=\{x\in X:\|x\|\leq1\},~~~ S_X=\{x\in X:\|x\|=1\}.
	\end{align*}
	Let $X^*$ denote the continuous dual of $X$. For a non-zero element $x\in X$, we denote the collection of all support functionals at $x$ by $J(x)$, i.e.,
	\begin{align*}
		J(x)=\{ \Psi\in S_{X^*}: \Psi(x)=\|x\| \},
	\end{align*}
	Let $Ext(X)$ denote the collection of all extreme points of $B_X$. An element $x\in B_X$ is said to be an exposed point of $B_X$ if $\|x\|=1$ and there exists $\Psi \in J(x)$ such that 
	\begin{align*}
		|\Psi(y)|=\|y\|\;\text{if and only if}\;y=cx\;\text{for some}\;c\in\mathbb{C}.
	\end{align*}
	\noindent We denote the collection of all exposed points of $B_X$ by $Exp(X)$. It is well-known that $Exp(X)\subseteq Ext(X)$. For $x,y\in X$, $x$ is said to be Birkhoff-James orthogonal to $y$, denoted by $x\perp_By$, if
	\begin{align*}
		\|x+\lambda y\|\geq\|x\|~~\textit{for~ all~}\lambda\in\mathbb{C}.
	\end{align*}
	In \cite{james}, James proved that for $x,y\in X$, $x\perp_By$ if and only if $x=0$ or there exists some $\Psi\in J(x)$ such that $\Psi(y)=0$. A non-zero element $x\in X$ is said to be smooth if $J(x)$ is singleton. James \cite{james} proved that a non-zero $x\in X$ is smooth if and only if 
	\begin{align*}
		x\perp_B y,~~  x\perp_B  z\implies x\perp_B (y+z)~\textit{for~all~}y,z\in X.
	\end{align*}
	\par
	In the first section of this article, we characterize the extreme points and the exposed points of any nowhere vanishing, point separating sub-algebra of $\mathcal{H}ol(\Gamma\cup\Int(\Gamma))$ for $\Gamma$ analytic. In the next section, we characterize Birkhoff-James orthogonality in the said sub-algebras and also identify its smooth points but for any simple closed curve $\Gamma$, not necessarily analytic. Characterizing the extreme points, the exposed points and the smooth points of the closed unit ball of a given normed linear space is of fundamental importance in determining the geometry of the space. We refer the readers to \cite{abatzoglou}, \cite{deeb-khalil}, \cite{gra1}, \cite{gra2}, \cite{holub}, \cite{Kadison}, \cite{lima1}, \cite{lima2}, \cite{lind}, \cite{sain1}, \cite{sain2}, \cite{sharir} for some of the illustrative works in this regard.\par
	
	In the final section, we find an interrelation between Birkhoff-James orthogonality in nowhere vanishing, point separating sub-algebras of $\mathcal{H}ol(\Gamma\cup\Int(\Gamma))$ with the zeros of holomorphic functions using some classical concepts of complex analysis.

	\section{Geomtery of $\mathcal{H}ol(\Gamma\cup\Int(\Gamma))$ for $\Gamma$ simple, closed and analytic}
	We begin with a couple of simple propositions. In the first one, we characterize the set $\mathcal{J}(\Gamma)$ for $\Gamma\subset\mathbb{C}$ analytic.
	\begin{prop}\label{J_gamma}
		Let $\Gamma\subset\mathbb{C}$ be a simple closed analytic curve and let $f:\Int(\Gamma)\to\mathbb{D}$ be an onto biholomorphic map. Then\\
		$(i)$ $f$ extends to a biholomorphic function $F:U\to\mathbb{C}$ for some neighborhood $U$ of $\Gamma\cup\Int(\Gamma)$ such that $F(\Gamma\cup\Int(\Gamma))=\overline{\mathbb{D}}$.\\
		$(ii)$ $h\in\mathcal{J}(\Gamma)$ if and only if $h$ is constant or
		\begin{align*}
			h(z)=c\prod\limits_{k=1}^n\left(\frac{F(z)-a_k}{1-\overline{a_k}F(z)}\right),\;z\in\Gamma\cup\Int(\Gamma),
		\end{align*}
		for some $c\in\mathbb{C}\backslash\{0\}$, $n\in\mathbb{N}$ and $a_k\in\mathbb{D}$ for $1\leq k\leq n$. 
	\end{prop}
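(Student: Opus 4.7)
The plan is to establish (i) by Schwarz reflection across the analytic arc $\Gamma$, and then derive (ii) from a finite Blaschke-product analysis of $h\circ F^{-1}$ on a neighbourhood of $\overline{\mathbb{D}}$.

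For (i), I would first invoke Carath\'eodory's theorem, which, since $\Gamma$ is a Jordan curve, extends $f$ to a homeomorphism $\overline{\Int(\Gamma)}\to\overline{\mathbb{D}}$ sending $\Gamma$ onto $\partial\mathbb{D}$. The fact that $\Gamma$ is \emph{analytic} then lets me reflect: near each $z_0\in\Gamma$ there exists an injective holomorphic map $\phi\colon D\to V$ from a disc $D$ centred at $0$ onto an open neighbourhood $V$ of $z_0$, with $\phi(D\cap\mathbb{R})=\Gamma\cap V$ and $\phi$ sending the two half-discs to the two sides of $\Gamma$ in $V$. After composing $f$ with a suitable automorphism of $\mathbb{D}$ that carries the image of the local arc onto a sub-arc of $\partial\mathbb{D}$ around $1$, the pulled-back function is holomorphic on one half-disc, continuous up to $D\cap\mathbb{R}$, and unimodular there; the standard Schwarz reflection principle then extends it holomorphically across the segment. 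Transporting back by $\phi$, one obtains a local holomorphic extension of $f$ beyond $\Gamma$ at $z_0$, and local uniqueness via the identity theorem allows me to patch these extensions into a single holomorphic $F$ on a neighbourhood $U$ of $\Gamma\cup\Int(\Gamma)$. Biholomorphicity would follow from non-vanishing of $F'$ on $\Gamma$ (read off from the reflection formula) together with injectivity of $f$ on $\Int(\Gamma)$ and Carath\'eodory boundary injectivity, shrinking $U$ if necessary; this simultaneously yields $F(\Gamma)=\partial\mathbb{D}$, hence $F(\Gamma\cup\Int(\Gamma))=\overline{\mathbb{D}}$.

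For (ii), the $(\Leftarrow)$ direction is immediate: $|F(z)|=1$ for $z\in\Gamma$ by (i), so each Blaschke factor has modulus $1$ and $|h(z)|=|c|$ on $\Gamma$. For the $(\Rightarrow)$ direction, suppose $h\in\mathcal{J}(\Gamma)$. If $\|h\|=0$, then $h\equiv 0$, which is the constant case. Otherwise set $H:=h\circ F^{-1}$, which is holomorphic on a neighbourhood of $\overline{\mathbb{D}}$ and satisfies $|H|=\|h\|$ on $\partial\mathbb{D}$. Being holomorphic in a neighbourhood of the compact set $\overline{\mathbb{D}}$ and not identically zero, $H$ has only finitely many zeros $a_1,\dots,a_n\in\mathbb{D}$, counted with multiplicity. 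I would then form $B(w):=\prod_{k=1}^{n}\frac{w-a_k}{1-\overline{a_k}w}$ and note that $H/B$ is holomorphic and zero-free on $\mathbb{D}$, continuous up to $\partial\mathbb{D}$, with $|H/B|=\|h\|$ on $\partial\mathbb{D}$. Applying the maximum modulus principle to $H/B$ and to $B/H$ then forces $H/B$ to be a constant $c$ with $|c|=\|h\|$; substituting $w=F(z)$ yields the stated formula, and the sub-case $n=0$ recovers the constant case.

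The main obstacle is the reflection step in (i): extending $f$ holomorphically across $\Gamma$ requires using the analyticity of $\Gamma$ to locally straighten the curve, transferring the unimodular boundary condition correctly, and patching the local extensions into a single biholomorphism onto a full neighbourhood of $\overline{\mathbb{D}}$. Once $F$ is in hand, (ii) reduces to a routine maximum-modulus plus finite Blaschke factorization argument on $\overline{\mathbb{D}}$.
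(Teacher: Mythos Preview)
Your proposal is correct and follows essentially the same approach as the paper: Schwarz reflection across the analytic boundary for (i), and for (ii) composing with $F^{-1}$ to reduce to the disc case. The only difference is one of detail: the paper packages (ii) as the observation that $T(g)=g\circ F$ is an isometric isomorphism $\mathcal{H}ol(\overline{\mathbb{D}})\to\mathcal{H}ol(\Gamma\cup\Int(\Gamma))$ and then cites the finite Blaschke characterization on $\overline{\mathbb{D}}$ as a known fact, whereas you explicitly carry out that characterization via the zero-factorization and the maximum modulus argument on $H/B$ and $B/H$.
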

	\begin{proof}
		The first part of the proposition follows from the observation that for every $z\in\Gamma$, there exists $U_z\subset\mathbb{C}$ open such that $\Gamma\cap U_z$ is a one-sided free arc of $\Gamma$ containing $z$ and hence $f$ can be extended biholomorphically to some neighborhood $V_z$ of $\Gamma\cap U_z$, using the Schwarz reflection principle (see \cite[Theorem 4, p. 235]{ahlfors} for details). The desired $F$ is now obtained by considering the extensions on $V_z$ for every $z\in\Gamma$ and restricting the function on a sufficiently small neighborhood of $\Gamma\cup\Int(\Gamma)$ so that it remains injective. \par
		The second part follows from the observation that $T:\mathcal{H}ol(\overline{\mathbb{D}})\to\mathcal{H}ol(\Gamma\cup\Int(\Gamma))$ given by
		\begin{align}\label{isom}
			T(g)(z):=g\circ F(z),\;z\in F^{-1}(U_g),
		\end{align}
		where $\overline{\mathbb{D}}\subset U_g$ open and $g:U_g\to\mathbb{C}$ holomorphic is an isometric isomorphism and $T(g)\in\mathcal{J}(\Gamma)$ if and only if $|g(z)|=\|g\|$ for every $|z|=1$.
	\end{proof}
	
	The second proposition pertains to the completion of $\mathcal{H}ol(\Gamma\cup\Int(\Gamma))$ for arbitrary simple closed curves $\Gamma\subset\mathbb{C}$ (not necessarily analytic). The result follows easily from the observation that $\mathcal{H}ol(\Gamma\cup\Int(\Gamma))$ contains all the polynomials.
	
	\begin{prop}
		If $\Gamma\subset\mathbb{C}$ is a simple closed curve, then $\mathcal{H}ol(\Gamma\cup\Int(\Gamma))$ is incomplete and its completion is isometrically isomorphic to
		\begin{align*}
			\mathcal{A}(\Int(\Gamma)):=\{f\in\mathcal{H}ol(\Int(\Gamma)):\;f\;\text{extends continuously on}\;\Gamma\cup\Int(\Gamma)\},
		\end{align*}
		equipped with the supremum norm.
	\end{prop}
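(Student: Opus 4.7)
The plan is to construct an isometric algebra embedding $\Phi:\mathcal{H}ol(\Gamma\cup\Int(\Gamma))\to\mathcal{A}(\Int(\Gamma))$ via restriction, prove that $\mathcal{A}(\Int(\Gamma))$ is complete and that the image of $\Phi$ is dense, and finally exhibit a single element of $\mathcal{A}(\Int(\Gamma))$ outside the image of $\Phi$. The first three steps identify $\mathcal{A}(\Int(\Gamma))$ with the completion of $\mathcal{H}ol(\Gamma\cup\Int(\Gamma))$, and the fourth forces incompleteness.

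For the embedding, send the class of $(f,U_f)$ to $f|_{\Gamma\cup\Int(\Gamma)}$. This is well-defined on equivalence classes because any two representatives agree on $\Int(\Gamma)$; being continuous on open neighborhoods of $\Gamma\cup\Int(\Gamma)$, they also agree on $\Gamma$ by taking limits along $\Int(\Gamma)$ (or equivalently by the identity theorem). The restriction is holomorphic on $\Int(\Gamma)$ and continuous on $\Gamma\cup\Int(\Gamma)$, so it lies in $\mathcal{A}(\Int(\Gamma))$. Injectivity and the algebra homomorphism properties are immediate, and isometry holds since both spaces carry the norm $\sup_{z\in\Gamma}|\cdot|$.

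To show completeness of $\mathcal{A}(\Int(\Gamma))$, note that by the maximum modulus principle, this norm agrees with $\sup_{\Gamma\cup\Int(\Gamma)}|\cdot|$ on $\mathcal{A}(\Int(\Gamma))$, so $\mathcal{A}(\Int(\Gamma))$ sits isometrically inside $C(\Gamma\cup\Int(\Gamma))$; it is closed there because uniform limits of continuous functions are continuous and uniform limits of holomorphic functions on $\Int(\Gamma)$ are holomorphic by Weierstrass's convergence theorem. For density, set $K:=\Gamma\cup\Int(\Gamma)$. Since $\Gamma$ is a Jordan curve, $\mathbb{C}\setminus K$ is connected (it is the exterior component produced by the Jordan curve theorem), so Mergelyan's theorem applies: every $h\in\mathcal{A}(\Int(\Gamma))$ is a uniform limit on $K$ of polynomials, and polynomials belong to $\mathcal{H}ol(\Gamma\cup\Int(\Gamma))$.

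Finally, incompleteness reduces to showing $\Phi$ is not surjective. Its image is precisely the set of $h\in\mathcal{A}(\Int(\Gamma))$ that admit a holomorphic extension to some open neighborhood of $K$. By the Riemann mapping theorem together with Carath\'eodory's boundary extension theorem (valid since $\Gamma$ is a Jordan curve), there is a biholomorphism $\Int(\Gamma)\to\mathbb{D}$ extending to a homeomorphism of closures, inducing an isometric algebra isomorphism between $\mathcal{A}(\Int(\Gamma))$ and the classical disc algebra $\mathcal{A}(\mathbb{D})$. A lacunary series such as $\sum_{n=1}^{\infty}z^{2^n}/n^2$ belongs to $\mathcal{A}(\mathbb{D})$ (uniform convergence by the Weierstrass $M$-test) yet has $\partial\mathbb{D}$ as a natural boundary, so it extends holomorphically across no boundary point; its pull-back therefore lies in $\mathcal{A}(\Int(\Gamma))\setminus\Phi(\mathcal{H}ol(\Gamma\cup\Int(\Gamma)))$. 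The only real subtlety is invoking Carath\'eodory's theorem to carry regularity through a possibly non-smooth Jordan boundary; once that is in hand the remaining steps are routine.
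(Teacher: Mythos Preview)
The paper offers only the sentence ``Since $\mathcal{H}ol(\Gamma\cup\Int(\Gamma))$ contains all the polynomials, the next proposition follows,'' so your write-up is vastly more detailed. Your treatment of the isometric embedding, the completeness of $\mathcal{A}(\Int(\Gamma))$ as a closed subspace of $C(\Gamma\cup\Int(\Gamma))$, and density via Mergelyan's theorem is correct and matches the spirit of what the paper leaves implicit.

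There is, however, one genuine gap, in the non-surjectivity step. You claim that because the lacunary series $h$ has $\partial\mathbb{D}$ as a natural boundary, its pull-back $h\circ\phi$ cannot lie in the image of $\Phi$. The implicit reasoning is: if $h\circ\phi$ extended holomorphically across some $z_0\in\Gamma$, then composing with $\phi^{-1}$ would extend $h$ across $\phi(z_0)\in\partial\mathbb{D}$. But Carath\'eodory's theorem only furnishes a \emph{homeomorphism} of closures; it does not make $\phi^{-1}$ holomorphic in a neighborhood of boundary points. When $\Gamma$ is a Jordan curve with no analytic sub-arc, $\phi$ extends holomorphically across no point of $\Gamma$, and your deduction breaks down as written. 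The repair is a short dichotomy: either the Riemann map $\phi$ itself fails to extend holomorphically to any open neighborhood of $\Gamma\cup\Int(\Gamma)$, in which case $\phi$ already witnesses $\mathcal{A}(\Int(\Gamma))\setminus\Phi(\mathcal{H}ol(\Gamma\cup\Int(\Gamma)))\neq\emptyset$; or $\phi$ does extend, in which case (after shrinking) it is biholomorphic onto a neighborhood of $\overline{\mathbb{D}}$, $\phi^{-1}$ is holomorphic past $\partial\mathbb{D}$, and your lacunary-series argument goes through verbatim.
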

	\par
	Now, we characterize the extreme points and the exposed points of the closed unit ball of any nowhere vanishing, point separating sub-algebra of $\mathcal{H}ol(\Gamma\cup\Int(\Gamma))$ for $\Gamma$ simple, closed and analytic. We begin with a preliminary lemma.
	
	\begin{lemma}
		\label{ratio1} Let $\Gamma\subset\mathbb{C}$ be a simple closed analytic curve. Let $X = \mathcal{H}ol(\Gamma\cup \Int(\Gamma))$ and let $f\in S_X$. If $z_0\in \Gamma$ is an isolated point of $M_f$ (see \eqref{norming points}) and $f(z_0)=e^{i\theta_0}$ for some $ \theta_0\in [0,2\pi) $, then
		\begin{equation}
			\label{lem1}
			\lim\limits_{\substack{z\to z_0\\ z\in\Gamma}}|z-z_0|^{r}\frac{\left|e^{i\theta_0}-f(z)\right|}{1-|f(z)|}=0,\\
		\end{equation}
		for some natural number $r$.
	\end{lemma}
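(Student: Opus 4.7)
The plan is to bound the numerator and denominator of the ratio separately by powers of $|z-z_0|$, using the Taylor expansion of $f$ at $z_0$ for the numerator and the real-analyticity of $1-|f|^2$ combined with the isolation of $z_0$ in $M_f$ for the denominator.

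To begin, since $z_0$ is isolated in $M_f$, the holomorphic function $f$ cannot agree with the constant $e^{i\theta_0}$ on any complex neighbourhood of $z_0$: otherwise the identity theorem would force $f\equiv e^{i\theta_0}$ on $\Int(\Gamma)$, giving $|f|\equiv 1$ on $\Gamma$ and contradicting isolation. Hence $e^{i\theta_0}-f$ has an isolated zero of finite order $m\geq 1$ at $z_0$, and one may factor $e^{i\theta_0}-f(z)=(z-z_0)^m h(z)$ with $h$ holomorphic near $z_0$ and $h(z_0)\neq 0$. This immediately yields the numerator estimate $|e^{i\theta_0}-f(z)|\leq C_1|z-z_0|^m$ for $z$ close to $z_0$, where $C_1$ is a local bound on $|h|$.

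The main work is a polynomial lower bound on $1-|f(z)|$ along $\Gamma$. Expanding,
\[
 \phi(z):=1-|f(z)|^2=2\Re\!\big[e^{-i\theta_0}(z-z_0)^m h(z)\big]-|z-z_0|^{2m}|h(z)|^2,
\]
so $\phi$ is real-analytic in $(\Re z,\Im z)$ near $z_0$ with leading homogeneous part the non-zero harmonic polynomial $2\Re\!\big[e^{-i\theta_0}h(z_0)(z-z_0)^m\big]$. By the maximum modulus principle on $\Int(\Gamma)$, $|f|\leq 1$ on $\Gamma\cup\Int(\Gamma)$, so $\phi\geq 0$ there; and $\phi$ cannot vanish identically on any complex neighbourhood of $z_0$, since $|f|$ being constantly $1$ on an open set would (by the open mapping theorem) force $f$ constant and contradict the isolation of $z_0$ in $M_f$. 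Parametrising $\Gamma$ near $z_0$ by a regular real-analytic arc $\gamma(t)$ with $\gamma(0)=z_0$ (an assumption fitting the analytic framework of Proposition \ref{J_gamma}), the composition $t\mapsto\phi(\gamma(t))$ is a non-trivial, non-negative real-analytic function of one variable, and therefore has a finite order of vanishing $N$ at $t=0$. Consequently $\phi(z)\geq c|z-z_0|^N$ for some $c>0$ and all $z\in\Gamma$ close to $z_0$, and dividing by $1+|f(z)|\leq 2$ gives $1-|f(z)|\geq (c/2)|z-z_0|^N$ on $\Gamma$ near $z_0$.

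Combining the two estimates,
\[
 |z-z_0|^{r}\,\frac{|e^{i\theta_0}-f(z)|}{1-|f(z)|}\;\leq\;\frac{2C_1}{c}\,|z-z_0|^{\,r+m-N}\longrightarrow 0
\]
as $z\to z_0$ along $\Gamma$ whenever $r>N-m$, so any natural number $r\geq\max(1,N-m+1)$ witnesses the lemma. The delicate point, and the main obstacle, is the polynomial lower bound on $\phi$ along $\Gamma$: a priori $|f(z)|$ could approach $1$ along $\Gamma$ faster than any polynomial rate, and this possibility is ruled out by invoking the real-analyticity of $\phi$ along a real-analytic parametrisation of $\Gamma$ together with the classical fact that a non-trivial analytic function of one real variable has finite order of vanishing at any zero.
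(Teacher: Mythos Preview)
Your argument has a genuine gap: you assume that $\Gamma$ admits a regular real-analytic parametrisation near $z_0$, and you even flag this explicitly. But the lemma is stated for an arbitrary simple closed curve $\Gamma$, and it is applied in that generality in Theorem~\ref{extreme}. The reference to Proposition~\ref{J_gamma} does not help, since that proposition concerns a different (and strictly narrower) setting. Without analyticity of $\Gamma$, the composition $t\mapsto\phi(\gamma(t))$ need not be real-analytic, and there is no a~priori reason why $1-|f|$ cannot vanish to infinite order along $\Gamma$ at $z_0$; your polynomial lower bound on the denominator is therefore unjustified in the stated generality.

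The paper's proof sidesteps this entirely by never attempting a lower bound on $1-|f(z)|$ in terms of $|z-z_0|$. Writing $g=1-f$ (after rotating so that $f(z_0)=1$), one has the algebraic identity
\[
|z-z_0|^r\,\frac{|1-f(z)|}{1-|f(z)|}=|z-z_0|^r\sqrt{1+\frac{2(|f(z)|-\Re f(z))}{(1-|f(z)|)^2}},
\]
and after elementary limit manipulations the problem reduces to showing that $|z-z_0|^r\,\dfrac{|\Im g(z)|}{\Re g(z)}\to 0$ along $\Gamma$. The key point is purely pointwise: since $z_0$ is isolated in $M_f$, one has $|f(z)|<1$ on $\Gamma$ near $z_0$, i.e.\ $|1-g(z)|<1$, which squared gives $|g(z)|^2<2\Re g(z)$. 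Hence
\[
\frac{|\Im g(z)|}{\Re g(z)}\;\leq\;\frac{2|\Im g(z)|}{|g(z)|^2}\;\leq\;\frac{2}{|g(z)|},
\]
and the right-hand side is controlled by $|z-z_0|^{-m_0}$ from the holomorphic factorisation of $g$ alone, with no regularity of $\Gamma$ required. Choosing $r>2m_0$ finishes the proof. Your upper bound on the numerator is the same as the paper's; it is the treatment of the denominator that needs to be replaced by this pointwise inequality.
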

	\begin{proof}
		Replacing $f$ by $e^{-i\theta_0}f$, we may and do assume that $f(z_0)=1$. Also, let $f$ be holomorphic on the domain $\Omega\supset\Gamma\cup\Int(\Gamma)$. Choose a neighborhood $N$ of $z_0$, contained in $\Omega$ such that $|f(z)|<1$ for every $z\in(\Gamma \cap N)\backslash\{z_0\}.$ Set $g(z):=1-f(z)$ on $N$. Then $g$ is holomorphic on $N$ and has a zero at $z=z_0$. Let $m_0$ be the multiplicity of the zero of $g$ at $z=z_0$. Then there exists a neighbourhood $N'$ of $z_0$ and a holomorphic function $h:N'\to\mathbb{C}$ with $h(z_0)\neq0$ such that:
		\begin{equation*}
			g(z)=(z-z_0)^{m_0}h(z),~z\in N'.\\
		\end{equation*}
		Now, observe that\\
		\begin{equation}\label{degree}
			|z-z_0|^{r}\frac{\left|1-f(z)\right|}{1-|f(z)|} = |z-z_0|^{r+m_0}\frac{|h(z)|}{1-|1-g(z)|},\\
		\end{equation}
		and\\
		\begin{align}\label{newform}
			\frac{1}{1-|1-g(z)|}&=\frac{1+|1-g(z)|}{1-|1-g(z)|^2}\nonumber\\
			&=\frac{1+|1-g(z)|}{2\Re(g(z))-|g(z)|^2}
		\end{align}
        
        Now, let $I\subset \mathbb{R}$ be some interval and $\gamma:I\to\mathbb{C}$ be an analytic parametrization of $\Gamma$ in some neighborhood of $z_0$. Since $g\circ \gamma$ is analytic on $I$, $\Re(g\circ\gamma)$ is a real-analytic function. Without loss of generality, we may and will assume that $0\in I$ and $\gamma(0)=z_0$. Then $\Re(g\circ\gamma)$ has a zero at 0 and hence, there exist a natural number $k$, a neighbourhood $I'\subset I$ of $0$ and a real-analytic function $\nu:I'\to\mathbb{R}$ with $\nu(0)\neq0$ such that 
        \begin{align}\label{realanal}
            g\left(\gamma(t)\right)=t^k\nu(t),~t\in I'.
        \end{align}
        Also, by the analyticity of the function $t\mapsto\gamma(t)-z_0$, there exist a natural number $n$, a neighbourhood $I"\subset I$ of $0$ and an analytic map $\psi:I"\to\mathbb{C}$ with $\psi(0)\neq0$ such that
        \begin{align}\label{finalists}
            \gamma(t)-z_0=t^n\psi(t),~t\in I".
        \end{align}
        Now, to show that \eqref{lem1} holds for some $r\in\mathbb{N}$, applying \eqref{degree}, \eqref{newform},\eqref{realanal} and \eqref{finalists} we need to show:
        \begin{align*}
            \lim\limits_{t\to0}|\gamma(t)-z_0|^{r+m_0}\frac{h\circ\gamma(t)\left(1+|1-g\circ\gamma(t)|\right)}{2t^k\nu(t)-|\gamma(t)-z_0|^{2m_0}|h\circ\gamma(t)|^2}&=0\\
            \Leftrightarrow
            \lim\limits_{t\to0}|t|^{n(r+m_0)}|\psi(t)|^{r+m_0}\frac{h\circ\gamma(t)\left(1+|1-g\circ\gamma(t)|\right)}{2t^k\nu(t)-|t^n\psi(t)|^{2m_0}|h\circ\gamma(t)|^2}&=0\\
        \end{align*}
        Clearly, choosing $r\in\mathbb{N}$ sufficiently large yields the result.
	\end{proof}
	
	We now come to the first of our main results.
	\begin{theorem}\label{extreme}
		Let $\Gamma\subset\mathbb{C}$ be a simple closed analytic curve and let $\mathfrak{A}$ be a nowhere vanishing, point separating sub-algebra of $\mathcal{H}ol(\Gamma\cup\Int(\Gamma))$. Then the following are equivalent:\\
		$(i)\;f\in Exp(\mathfrak{A})$.\\
		$(ii)\;f\in Ext(\mathfrak{A})$.\\
		$(iii)\,\|f\|=1$ and $f\in J(\Gamma)$, i.e., $|f(z)|=1$ for every $z\in\Gamma$.
	\end{theorem}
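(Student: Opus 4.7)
The implication $(i)\Rightarrow(ii)$ is the standard inclusion $Exp(\mathfrak{A})\subseteq Ext(\mathfrak{A})$ noted in the introduction, so the substance lies in the remaining two implications.

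For $(iii)\Rightarrow(i)$, my plan is to construct an exposing functional as a weighted evaluation along an accumulation sequence. By compactness of $\Gamma$, since $M_f$ is infinite it contains a sequence of distinct points $\{z_n\}$ converging to some $z_0\in M_f$. Setting
\[
    \Psi(h) := \sum_{n=1}^{\infty}\alpha_n\,\overline{f(z_n)}\,h(z_n),
\]
with positive weights $\alpha_n$ summing to $1$, the chain $|\Psi(h)|\leq\sum_n\alpha_n|h(z_n)|\leq\|h\|$ together with $\Psi(f)=\sum_n\alpha_n|f(z_n)|^2=1$ gives $\Psi\in J(f)$. Supposing $|\Psi(h)|=\|h\|$, equality in the second inequality forces $|h(z_n)|=\|h\|$ for every $n$, while equality in the first forces the numbers $\overline{f(z_n)}h(z_n)$ to share a common unimodular argument $\beta$; hence $h(z_n)=\beta\|h\|f(z_n)$ for every $n$. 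The function $h-\beta\|h\|f$ being holomorphic on a connected open neighbourhood of $\Gamma\cup\Int(\Gamma)$ (and therefore of $z_0$) whose zero set accumulates at $z_0$, the identity theorem yields $h=cf$, so $\Psi$ exposes $f$.

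For $(ii)\Rightarrow(iii)$, I argue contrapositively. Extremality forces $\|f\|=1$; I must rule out $M_f=\{z_1,\ldots,z_k\}$ finite, with $f(z_j)=e^{i\theta_j}$. Applying Lemma~\ref{ratio1} at each (now isolated) $z_j$ and combining with the lower bound $|e^{i\theta_j}-f(z)|\gtrsim |z-z_j|^{m_j}$, where $m_j$ is the multiplicity of the zero of $e^{i\theta_j}-f$ at $z_j$, produces $s_j\in\mathbb{N}$ and $c_j>0$ with $1-|f(z)|\geq c_j|z-z_j|^{s_j}$ for $z\in\Gamma$ near $z_j$. I then manufacture a non-zero $p\in\mathfrak{A}$ vanishing at each $z_j$ to order exceeding $s_j$ by exploiting the algebra hypotheses: a suitable generic linear combination yields $\varphi,\eta\in\mathfrak{A}$ with $\varphi(z_j)\ne 0$ for every $j$ and with $\eta(z_1),\ldots,\eta(z_k)$ pairwise distinct; the products $u_j=\eta\varphi-\eta(z_j)\varphi\in\mathfrak{A}$ vanish precisely at $z_j$ among $\{z_1,\ldots,z_k\}$, and $p=(u_1u_2\cdots u_k)^{N}$ does the job for $N$ large. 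For small $\varepsilon>0$ the preceding estimates give $|\varepsilon p(z)|\leq 1-|f(z)|$ everywhere on $\Gamma$ — locally near each $z_j$ from the vanishing-order match, and elsewhere from the uniform gap $|f|\leq 1-\delta$ on the compact set $\Gamma\setminus\bigcup_j B(z_j,\rho)$ — which gives $\|f\pm\varepsilon p\|\leq 1$. Writing $f=\tfrac{1}{2}\bigl((f+\varepsilon p)+(f-\varepsilon p)\bigr)$ with $f+\varepsilon p\ne f-\varepsilon p$ then contradicts extremality.

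The delicate step is producing $p$ \emph{inside} $\mathfrak{A}$, not merely inside $\mathcal{H}ol(\Gamma\cup\Int(\Gamma))$: since constants need not belong to $\mathfrak{A}$, one cannot simply form $\eta-\eta(z_j)$, and one has to remain in the subalgebra via products such as $\eta\varphi-\eta(z_j)\varphi$. Matching the vanishing order of $p$ to the polynomial lower bound for $1-|f|$ furnished by Lemma~\ref{ratio1} is precisely what makes the perturbation $\pm\varepsilon p$ stay inside the unit ball and defeat extremality.
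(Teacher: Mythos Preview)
Your argument is correct and follows essentially the same route as the paper in both nontrivial implications: the exposing functional in $(iii)\Rightarrow(i)$ is exactly the paper's weighted sum of point evaluations along an infinite subset of $M_f$ followed by the identity theorem, and $(ii)\Rightarrow(iii)$ is the same contrapositive via Lemma~\ref{ratio1} plus a non-zero perturbation $p\in\mathfrak{A}$ satisfying $|p|\lesssim 1-|f|$ on $\Gamma$. The only cosmetic difference is how $p$ is built inside $\mathfrak{A}$: the paper writes down an explicit two-point interpolation formula to obtain $f_k\in\mathfrak{A}$ with $f_k(z_k)=0,\ f_k(z_0)=1$ and sets $P=\prod_k f_k^{\lceil (m_k+r_k)/\mu_k\rceil}$, whereas you assemble the $u_j=\eta\varphi-\eta(z_j)\varphi$ from a single nowhere-vanishing $\varphi$ and a single separating $\eta$ (just make sure $\eta$ is non-constant when $k=1$, which point separation guarantees generically, so that $u_1\not\equiv 0$).
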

	\begin{proof}
		$(i)\Rightarrow(ii)$\\
		Elementary as discussed before.\\
		$(ii)\Rightarrow(iii)$\\
		Suppose that $\Gamma\cup\Int(\Gamma)\subset \Omega$ is open and $f:\Omega\to\mathbb{C}$ is holomorphic. If $f\in Ext(\mathfrak{A})$, clearly, $f\in S_{\mathfrak{A}}$. 
		\par We first prove that $f\in J(\Gamma)$ if and only if $M_f$ is infinite. The only if part is clearly trivial and for the if part, let $z_n\in M_f$ be an infinite sequence such that $z_n\to z_0$. Consider an interval $I\subset \mathbb{R}$ such that $0\in I$ and $\gamma:I\to\mathbb{C}$ is an analytic parametrization of $\Gamma$ in some neighbourhood of the point $z_0$ with $\gamma(0)=z_0$. Clearly, $\left|f\circ\gamma(t)\right|^2=\left(\Re\left(f\circ\gamma(t)\right)\right)^2+\left(\Im\left(f\circ\gamma(t)\right)\right)^2$ is real analytic on $I$. Since $|f\circ\gamma(t)|^2=1$ for infinitely many $t\in I$, by the identity theorem for real-analytic functions, $|f\circ\gamma(t)|=1$ for every $t\in I$. The result now follows using an elementary connectedness argument. 
		\par Now, assume for the sake of contradiction, $f\in S_{\mathfrak{A}}$ and $M_f=\{z_1,z_2,\dots,z_n\}$ for some $n\in\mathbb{N}$.  Then by Lemma \ref{ratio1}, there exist $r_k\in\mathbb{N}$ such that
		\begin{align}\label{limits}
			\lim\limits_{\substack{z\to z_k\\ z\in\Gamma}}|z-z_k|^{r_k}\frac{\left|e^{i\theta_k}-f(z)\right|}{1-|f(z)|}=0,
		\end{align}
		where $f(z_k)=e^{i\theta_k}$ for every $1\leq k\leq n$. Also, let $m_k$ denote the multiplicity of the zero $z_k$ of the holomorphic map $e^{i\theta_k}-f(z),z\in \Omega$ for every $1\leq k\leq n$.\par
		Now, let $w_1,w_2\in\Gamma$ and $c_1,c_2\in\mathbb{C}$. Since $\mathfrak{A}$ is nowhere vanishing and point separating, there exist $h_1,h_2,h_3\in\mathfrak{A}$ such that
		\begin{align*} 
			h_1(w_1),h_2(w_2)\neq 0,\quad h_3(w_1)\neq h_3(w_2).
		\end{align*}
		Then 
		\begin{align*}
			h:=\frac{c_1(h_1h_3-h_3(w_2)h_1)}{h_1(w_1)(h_3(w_1)-h_3(w_2))}+\frac{c_2(h_2h_3-h_3(w_1)h_2)}{h_2(w_2)(h_3(w_2)-h_3(w_1))},
		\end{align*}
		is well-defined and belongs to $\mathfrak{A}$ with $h(w_1)=c_1$ and $h(w_2)=c_2$. \par
		Now, fix $z_0\in\Gamma\backslash M_f$. Then, there exist $f_1,f_2,\dots f_n\in\mathfrak{A}$, such that $f_k(z_k)=0$ and $f_k(z_0)=1$ for every $1\leq k\leq n$. Suppose $\Gamma\cup\Int(\Gamma)\subset U_k$ is open such that $f_k: U_k\to\mathbb{C}$ is holomorphic and $\mu_k$ is the multiplicity of the zero $z_k$ of the holomorphic map $f_k$ for every $1\leq k\leq n$. Define $P:\Omega\cap\left(\bigcap\limits_{k=1}^n U_k\right)\to\mathbb{C}$ by
		\begin{align*}
			P(z):=\prod\limits_{k=1}^n\left(f_k(z)\right)^{\left\lceil\frac{m_k+r_k}{\mu_k}\right\rceil}, z\in \Omega\cap\left(\bigcap\limits_{k=1}^n U_k\right),
		\end{align*}
		where $\lceil \alpha\rceil=\min\{n\in\mathbb{Z}:\alpha\leq n\}$ for any $\alpha\in\mathbb{R}$. Then, since the multiplicity of the zero $z_k$ of $P$ is at least $m_k+r_k$, for every $1\leq k\leq n$,
		\begin{align*}
			P(z)=(z-z_k)^{m_k+r_k}G_k(z),\;z\in \Omega\cap\left(\bigcap\limits_{k=1}^n U_k\right),
		\end{align*}
		for some $G_k:\Omega\cap\left(\bigcap\limits_{k=1}^n U_k\right)\to\mathbb{C}$ holomorphic. Note that $G_k(z_k)$ may be zero for some $1\leq k\leq n$. Therefore, for any $1\leq k\leq n$, by \eqref{limits},
		\begin{align*}
			\lim\limits_{\substack{z\to z_k\\z\in\Gamma}}\frac{|P(z)|}{1-|f(z)|}=G_k(z_k)\lim\limits_{\substack{z\to z_k\\z\in\Gamma}}\frac{|z-z_k|^{m_k}}{|e^{i\theta_k}-f(z)|}|z-z_k|^{r_k}\frac{|e^{i\theta_k}-f(z)|}{1-|f(z)|}=0.
		\end{align*}
		Fix $\epsilon>0$. Then there exist $N_1,N_2,\dots,N_n\subset\mathbb{C}$ open such that $z_k\in N_k$ and 
		\begin{align*}
			\frac{|P(z)|}{1-|f(z)|}<\epsilon,\;z\in\Gamma\cap N_k\backslash\{z_k\},\;1\leq k\leq n.
		\end{align*}
		Since $\Gamma\backslash\left(\bigcup\limits_{k=1}^nN_k\right)$ is compact and $z\mapsto \frac{|P(z)|}{1-|f(z)|}$ is continuous on $\Gamma\backslash\left(\bigcup\limits_{k=1}^nN_k\right)$,
		\begin{align*}
			\frac{|P(z)|}{1-|f(z)|}\leq M,\;z\in\Gamma\backslash\left(\bigcup\limits_{k=1}^nN_k\right),
		\end{align*}
		for some $M>0$. Set $K=\max\{M,\epsilon\}.$ Then
		\begin{align*}
			|P(z)|\leq K(1-|f(z)|),\;z\in\Gamma.
		\end{align*}
		Hence,
		\begin{align*}
			|f(z)|+\left|\frac{1}{K}P(z)\right|\leq1,\;z\in\Gamma.
		\end{align*}
		Clearly, $\frac{1}{K}P\in\mathfrak{A}$ and $P(z_0)=1$. Hence $P\not\equiv 0$ proving $f\notin Ext(\mathfrak{A})$.\\
		$(iii)\Rightarrow (i)$\\
		Let $f\in S_{\mathfrak{A}}\cap J(\Gamma)$. Fix $\{z_n:n\in\mathbb{N}\}\subset M_f=\Gamma$ such that $z_n\to z_0$ for some $z_0$. Consider $\Psi:\mathfrak{A}\to\mathbb{C}$ given by
		\begin{align*}
			\Psi(h):=\sum\limits_{n=1}^\infty\frac{\overline{\sgn(f(z_n))}}{2^n}h(z_n),\;h\in\mathfrak{A}.
		\end{align*}
		Then clearly, $\Psi$ is unit norm functional on $\mathfrak{A}$ and $\Psi(f)=\|f\|$. Also, $|\Psi(h)|=\|h\|$ for some $h\in\mathfrak{A}$ if and only if 
		\begin{align*}
			h(z_n)=e^{i\theta_0}\|h\|f(z_n),
		\end{align*}
		for every $n\in\mathbb{N}$ and some fixed $\theta_0\in[0,2\pi)$. Since $f$ and $h$ are holomorphic on some connected neighborhood of $\Gamma\cup\Int(\Gamma)$, by identity theorem, $h=e^{i\theta_0}\|h\|f$ proving $f\in Exp(\mathfrak{A})$.
	\end{proof}

	Clearly, if $\Gamma$ is not analytic, there exist functions $f\in Ext(\mathcal{H}ol(\Gamma\cup\Int(\Gamma)))$ such that $M_f\neq\Gamma$. The following simple example illustrates this:
	\begin{example}
		Let $\Gamma:=\{e^{it}:t\in[0,\pi]\}\cup[-1,1]$. Then any monomial $h_n:=z^n,~z\in\mathbb{C}$ is not in $\mathcal{J}(\Gamma)$ but is an extreme point of the closed unit ball of $\mathcal{H}ol(\Gamma\cup\Int(\Gamma))$, since $M_{h_n}=\{e^{it}:t\in[0,\pi]\}$ is an infinite set and therefore if $h_n=\frac{1}{2}(f+g)$ for some $f,g\in S_{\mathcal{H}ol(\Gamma\cup\Int(\Gamma))}$, then $h_n=f=g$ on $M_{h_n}$ proving $f=h_n=g$ by the identity theorem.
	\end{example}
	
	Proposition \ref{J_gamma} and Theorem \ref{extreme} completely characterize the extreme points of the closed unit ball of $\mathcal{H}ol(\Gamma\cup\Int(\Gamma))$ for simple closed analytic curves $\Gamma\subset\mathbb{C}$ as $B\circ F$ where $B$ is a finite Blaschke product or a unimodualr constant function on $\mathbb{D}$ and $F$ is a holomorphic extension of the Riemann map $f:\Int(\Gamma)\to\mathbb{D}$ on some neighborhood of $\Gamma\cup\Int(\Gamma)$. Let us recall the characterization of the extreme points of the closed unit ball of the disc algebra $\mathcal{A}(\mathbb{D})$ from \cite[p.139]{hoffman} given by: 
	\begin{theorem}
		A function $f\in Ext(\mathcal{A}(\mathbb{D}))$ if and only if $|f|\leq1$ on $\overline{\mathbb{D}}$ and 
		\begin{equation*}
			\int\limits_0^{2\pi}\ln{(1-|f(e^{it})|})dt=-\infty.
		\end{equation*}
	\end{theorem}
	Hence clearly, if $f$ is an extreme point of the closed unit ball of $\mathcal{H}ol(\overline{\mathbb{D}})$, $f\in Ext(\mathcal{A}(\mathbb{D}))$. Also observing that the map $T$ defined in \eqref{isom} can be extended to an isometric isomorphism from $\mathcal{A}(\mathbb{D})$ onto $\mathcal{A}(\Int(\Gamma))$, we can conclude the following corollary:
	\begin{cor}
		If $\Gamma\subset\mathbb{C}$ is a simple closed analytic curve, any extreme point of the closed unit ball of any nowhere vanishing, point separating sub-algebra of $\mathcal{H}ol(\Gamma\cup\Int(\Gamma))$ is an extreme point of the closed unit ball of its completion $\mathcal{A}(\Int(\Gamma))$.
	\end{cor}
	The converse of this result is not true, i.e., there exist extreme points of the closed unit ball of the disc algebra $\mathcal{A}(\mathbb{D})$ that are not holomorphic on any neighborhood of $\overline{\mathbb{D}}$ as illustrated by the following example. Note that any function $f\in Ext(\mathcal{A}(\mathbb{D}))$ which is holomorphic in some neighborhood of $\overline{\mathbb{D}}$ must be an extreme point of the closed unit ball of $\mathcal{H}ol(\overline{\mathbb{D}})$, i.e., in particular, $f$ must be a finite Blaschke product or a unimodular constant.
	\begin{example}
		Let $g:[0,2\pi]\to[0,1)$ be a continuous function such that $g(t)=e^{-n}$ on $\left[\frac{1}{2n},\frac{1}{2n-1}\right]$; $n\in\mathbb{N}$ with $g(0)=g(2\pi)=0$, $g$ is increasing on $[0,1]$ and strictly decreasing on $[1,2\pi]$. Note that $\ln{(1-g(t))}$ is continuous and of bounded variation. Consider $f:\mathbb{D}\to\mathbb{C}$ given by
		\begin{equation*}
			f(z):=\exp{\left[\frac{1}{2\pi}\int\limits_0^{2\pi}\left(\frac{e^{it}+z}{e^{it}-z}\right)\ln{(1-g(t))dt}\right]},~~z\in\mathbb{D}.
		\end{equation*}
		Then $f$ is holomorphic on $\mathbb{D}$ and has absolutely summable Taylor coefficients by Hardy's {Theorem} \cite[p.70]{hoffman}. Hence $f$ can be extended continuously to the boundary of $\mathbb{D}$, i.e., $f\in\mathcal{A}(\mathbb{D})$. Also, $|f(e^{it})|=1-g(t)$ for $t\in[0,2\pi)$ and so $|f|\leq1$ on $\overline{\mathbb{D}}$. Further,
		\begin{align*}
			\int\limits_0^{2\pi}\ln{(1-|f(e^{it})|)}dt=\int\limits_0^{2\pi}\ln{(g(t))}dt\leq\sum\limits_{n=1}^\infty\frac{-n}{2n(2n-1)}=-\infty,
		\end{align*}
		{proving} $f\in Ext(\mathcal{A}(\mathbb{D}))$. However, $f(e^{it})=1$ for $t\in[0,2\pi)$ if and only if $t=0$. Therefore, $f\notin Ext(\mathcal{H}ol(\overline{\mathbb{D}}))$, i.e., $f\notin\mathcal{H}ol(\overline{\mathbb{D}})$.
	\end{example}

	\section{Birkhoff-James orthogonality and characterization of smooth points of $\mathcal{H}ol(\Gamma\cup \Int(\Gamma))$}

	Now, we characterize Birkhoff-James orthogonality of two elements in any nowhere vanishing, point separating sub-algebra of $\mathcal{H}ol(\Gamma\cup \Int(\Gamma))$ for a simple closed curve $\Gamma\subset\mathbb{C}$. We begin by providing two sufficient conditions.
	\begin{prop}\label{argument}
		Let $ \Gamma\subset\mathbb{C} $ be a simple closed curve, and $\mathfrak{A}$ be a nowhere vanishing, point separating sub-algebra of $\mathcal{H}ol(\Gamma\cup \Int(\Gamma))$. Let $ f_1, f_2\in \mathfrak{A}$.\\
		(i) If for every $ \theta \in [0, 2\pi) $ there exists $ z\in M_{f_1} $ such that $\arg~ \frac{f_2(z)}{f_1(z)} = \theta $, then $ f_1\perp_B f_2 $. \\
		(ii) If there exists $ z\in M_{f_1} \cap \mathcal{Z}_{f_2} $, then $ f_1\perp_B f_2 $ (see \eqref{zeros} for definition of $\mathcal{Z}_{f}$).
	\end{prop}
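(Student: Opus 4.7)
The plan is to use the definition of Birkhoff-James orthogonality directly: to show $f_1 \perp_B f_2$, I need to verify $\|f_1 + \lambda f_2\| \geq \|f_1\|$ for every $\lambda \in \mathbb{C}$. In both parts, the strategy is to produce, for each choice of $\lambda$, a point $z \in M_{f_1}$ at which $|f_1(z) + \lambda f_2(z)| \geq \|f_1\|$; taking the supremum over $\Gamma$ then yields the desired inequality. The trivial cases $f_1 = 0$ and $\lambda = 0$ can be disposed of at the outset, so I may assume $\|f_1\| > 0$ and $\lambda \neq 0$.

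For part (i), I would first observe that any $z \in M_{f_1}$ satisfies $f_1(z) \neq 0$, so the factorisation
\begin{align*}
f_1(z) + \lambda f_2(z) = f_1(z)\left(1 + \lambda\,\frac{f_2(z)}{f_1(z)}\right)
\end{align*}
is legitimate. The key step is then to choose the angle $\theta$ in the hypothesis so as to align the phase: setting $\theta_0 \in [0,2\pi)$ to be the unique value with $\arg(\lambda) + \theta_0 \equiv 0 \pmod{2\pi}$ and picking $z_0 \in M_{f_1}$ with $\arg\tfrac{f_2(z_0)}{f_1(z_0)} = \theta_0$, the product $\lambda \tfrac{f_2(z_0)}{f_1(z_0)}$ becomes a non-negative real number. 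Consequently $|1 + \lambda \tfrac{f_2(z_0)}{f_1(z_0)}| \geq 1$, and multiplying by $|f_1(z_0)| = \|f_1\|$ gives $|f_1(z_0) + \lambda f_2(z_0)| \geq \|f_1\|$, which is precisely what is needed.

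For part (ii), the argument is essentially a one-liner: if $z_0 \in M_{f_1} \cap \mathcal{Z}_{f_2}$, then for every $\lambda \in \mathbb{C}$,
\begin{align*}
|f_1(z_0) + \lambda f_2(z_0)| = |f_1(z_0)| = \|f_1\|,
\end{align*}
so $\|f_1 + \lambda f_2\| \geq \|f_1\|$ automatically. This may also be viewed as a degenerate limiting case of (i), corresponding to the situation where the phase of $f_2(z_0)/f_1(z_0)$ is immaterial because $f_2(z_0)$ vanishes.

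There is no serious obstacle in either part; the only subtlety is the phase-matching argument in (i), which hinges on the fact that $\arg$ is surjective onto $[0,2\pi)$ and that the hypothesis furnishes a norming point realising any prescribed argument. No analytic machinery such as Lemma \ref{ratio1} or the identity theorem is required here, since the proposition only asserts sufficiency; the corresponding necessity and the characterisation of smoothness will presumably need those deeper tools in subsequent results.
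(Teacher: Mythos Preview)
Your proof is correct and follows essentially the same approach as the paper: both parts rely on producing, for each $\lambda$, a point $z_0\in M_{f_1}$ at which $|f_1(z_0)+\lambda f_2(z_0)|\ge\|f_1\|$, with part (i) handled by the identical phase-matching choice (your $\theta_0$ with $\arg(\lambda)+\theta_0\equiv 0$ is exactly the paper's $2\pi-\phi$) and part (ii) by the one-line evaluation at $z_0\in M_{f_1}\cap\mathcal Z_{f_2}$.
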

	
	\begin{proof}
		$ (i) $ For $ \lambda = 0 $, $ \| f_1 + \lambda f_2 \| = \| f_1 \| $. If instead, $ \lambda = re^{i\phi} $ for some $ r>0 $ and $ \phi \in (0, 2\pi ] $, then by the hypotheses, there exists $ z\in M_{f_1} $ such that $ \arg~ \frac{f_2(z)}{f_1(z)} = 2\pi - \phi $. Let $ \frac{f_2(z)}{f_1(z)} = r_1e^{i(2\pi -\phi)} $ for some $ r_1>0 $. Then clearly,
		\begin{equation*}
			\| f_1 + \lambda f_2 \|  \geq | f_1(z) + \lambda f_2(z) | \geq \| f_1\|.
		\end{equation*}
		Since $ \lambda\in \mathbb{C}\setminus \{0\} $ was chosen arbitrarily, $ f_1 \perp_B f_2 $.\\
		
		$(ii) $ This is immediate since for $\lambda\in\mathbb{C}$ arbitrary,
		\begin{align*}
			\| f_1 + \lambda f_2 \| \geq | f_1(z) + \lambda f_2(z) |= \| f_1\|.
		\end{align*}
	\end{proof}
	The following corollary follows directly from the first part of the above proposition.
	\begin{cor}\label{orthonormal basis}
		Let $ \Gamma:=\{ z \in \mathbb{C} : |z| = r \} $. Let $ h_j(z):=z^j,\;z\in\mathbb{C} $ for $ j\in \mathbb{N} $. Then $ h_n \perp_B h_m $ for $ m\neq n $ in any nowhere vanishing, point separating sub-algebra of $\mathcal{H}ol(\Gamma\cup\Int(\Gamma))$ containing {all the monomials}. 
	\end{cor}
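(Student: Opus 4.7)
The plan is to invoke Proposition \ref{argument}(i) directly, with $f_1 = h_n$ and $f_2 = h_m$. This reduces the claim to two simple assertions about the monomials on the circle $\Gamma = \{z \in \mathbb{C} : |z| = r\}$: first, that $M_{h_n} = \Gamma$, and second, that the argument of $h_m/h_n$ restricted to $\Gamma$ surjects onto $[0, 2\pi)$.

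For the first assertion, I would observe that for any $z \in \Gamma$, $|h_n(z)| = |z|^n = r^n$, so in particular $\|h_n\| = r^n$ and $M_{h_n} = \Gamma$. For the second, I would parametrize $z = r e^{i\phi}$ with $\phi \in [0, 2\pi)$ and compute
\begin{equation*}
\frac{h_m(z)}{h_n(z)} = z^{m-n} = r^{m-n} e^{i(m-n)\phi},
\end{equation*}
so that $\arg \frac{h_m(z)}{h_n(z)} \equiv (m-n)\phi \pmod{2\pi}$. Since $m \neq n$, the integer $m - n$ is nonzero, so the map $\phi \mapsto (m-n)\phi \pmod{2\pi}$ is surjective onto $[0, 2\pi)$ as $\phi$ ranges over $[0, 2\pi)$. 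Hence for every $\theta \in [0, 2\pi)$ there exists $z \in M_{h_n}$ with $\arg \frac{h_m(z)}{h_n(z)} = \theta$, and Proposition \ref{argument}(i) yields $h_n \perp_B h_m$.

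There is essentially no obstacle here; the corollary is a direct application of the previous proposition, exploiting the fact that on a circle centered at the origin every monomial is everywhere norming. The only minor subtlety is handling $\theta = 0$, which is covered by taking $\phi = 0$ (and more generally by any $\phi$ with $(m-n)\phi \in 2\pi\mathbb{Z}$), but this falls out of the surjectivity argument without any special treatment.
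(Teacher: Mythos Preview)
Your proof is correct and follows exactly the route indicated by the paper, which simply records that the corollary is an immediate consequence of Proposition~\ref{argument}(i). Your explicit verification that $M_{h_n}=\Gamma$ and that $\phi\mapsto (m-n)\phi\pmod{2\pi}$ is surjective fills in precisely the details the paper leaves implicit.
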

	In order to completely characterize the Birkhoff-James orthogonality of two elements in any nowhere vanishing, point separating sub-algebra of $\mathcal{H}ol(\Gamma\cup\Int(\Gamma))$, we introduce the following definition.
	
	\begin{definition}
		A subset $ A \subseteq\mathbb{C}^2 $ is said to be an \textit{orthogonality covering set} if 
		\begin{align*}
			\mathbb{C}= \bigcup\limits_{(u,v)\in A}\{ \lambda\in \mathbb{C}: | u+\lambda v | \geq |u| \}.
		\end{align*}
	\end{definition}
	We furnish the following two examples illustrating the idea.
	\begin{example}\label{singleton covering set}
		$ A = \{ (z_1, z_2) \}\subset\mathbb{C}^2 $ is an orthogonality covering set if and only if $z_1=0$ or $ z_2 = 0 $.
	\end{example}
	\begin{example}
		$ A =\{(z_1,z_2),(w_1,w_2)\}\subset\mathbb{C}^2$ is an orthogonality covering set if and only if $\overline{z_1}z_2w_1\overline{w_2}\in(-\infty,0]$.
	\end{example}
	\begin{theorem}\label{characterization}
		Let $ \Gamma\subset\mathbb{C} $ be a simple closed curve, and $\mathfrak{A}$ be a nowhere vanishing, point separating sub-algebra of $\mathcal{H}ol(\Gamma\cup \Int(\Gamma))$.  Let $ f_1, f_2\in \mathfrak{A} $. Then
		\begin{align*}
			f_1\perp_B f_2 ~\Leftrightarrow~ \{ (f_1(z), f_2(z)) : z\in M_{f_1} \} ~\textit{~is an~orthogonality~covering~ set}. 
		\end{align*}
	\end{theorem}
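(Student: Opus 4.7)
The plan is as follows. The direction $(\Leftarrow)$ is immediate from the definition of orthogonality covering set: for any $\lambda\in\mathbb{C}$ the hypothesis supplies a point $z\in M_{f_1}$ with $|f_1(z)+\lambda f_2(z)|\geq |f_1(z)|=\|f_1\|$, whence $\|f_1+\lambda f_2\|\geq \|f_1\|$. This is essentially the content of Proposition \ref{argument}(i), extended over all $\lambda$.

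For the non-trivial direction $(\Rightarrow)$, I would argue by contrapositive. Suppose that $A:=\{(f_1(z),f_2(z)):z\in M_{f_1}\}$ is not an orthogonality covering set; then there exists $\lambda_0\in\mathbb{C}\setminus\{0\}$ with $|f_1(z)+\lambda_0 f_2(z)|<|f_1(z)|=\|f_1\|$ for every $z\in M_{f_1}$. Squaring, the continuous function $\varphi(z):=2\Re(\overline{f_1(z)}\lambda_0 f_2(z))+|\lambda_0 f_2(z)|^2$ is strictly negative on the compact set $M_{f_1}$, so it is bounded above there by some $-c_0<0$ (in particular $f_2\neq 0$ on $M_{f_1}$). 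The goal is then to show $\|f_1+t\lambda_0 f_2\|<\|f_1\|$ for a sufficiently small $t>0$, contradicting $f_1\perp_B f_2$.

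Introduce $\psi_t(z):=|f_1(z)+t\lambda_0 f_2(z)|^2-\|f_1\|^2$, which is continuous on $\Gamma\times[0,1]$ and quadratic in $t$ with nonnegative leading coefficient $|\lambda_0 f_2(z)|^2$. On $M_{f_1}$ one has $\psi_0(z)=0$ and $\psi_1(z)\leq -c_0$; writing $\psi_t(z)=t\bigl(|\lambda_0 f_2(z)|^2 t+2\Re(\overline{f_1(z)}\lambda_0 f_2(z))\bigr)$ with the inner bracket nondecreasing in $t\in[0,1]$ gives $\psi_t(z)\leq -tc_0$ on $M_{f_1}$ for every $t\in[0,1]$. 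Joint continuity of $\psi_t(z)$ together with compactness of $M_{f_1}$ then propagates this estimate to $\psi_t(z)\leq -tc_0/2$ on some open neighbourhood $N$ of $M_{f_1}$ in $\Gamma$ for $t$ in a small interval $(0,t_1]$. On the complementary compact set $\Gamma\setminus N$, which misses $M_{f_1}$, the quantity $K:=\sup_{z\in\Gamma\setminus N}|f_1(z)|$ satisfies $K<\|f_1\|$, whence $|f_1(z)+t\lambda_0 f_2(z)|\leq K+t|\lambda_0|\|f_2\|<\|f_1\|$ as soon as $t<(\|f_1\|-K)/(|\lambda_0|\|f_2\|)$. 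Picking $t$ smaller than both thresholds produces the required contradiction.

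The delicate point, and the reason a single naive estimate does not suffice, is the need to glue two different regimes. On $M_{f_1}$ the zeroth-order term $|f_1|^2-\|f_1\|^2$ already vanishes, so the desired negativity of $\psi_t$ must be extracted from the first-order term in $t$; on $\Gamma\setminus M_{f_1}$ the zeroth-order term itself provides the slack, but that slack is not uniformly bounded away from zero near $M_{f_1}$. The convexity in $t$ on the one side and the neighbourhood-plus-complement compactness decomposition of $\Gamma$ on the other are precisely what combine to deliver a single small $t>0$ that works uniformly across $\Gamma$.
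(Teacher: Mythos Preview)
Your argument is correct and is essentially the paper's: contrapositive for necessity, convexity (your quadratic $\psi_t$) to pass from $\lambda_0$ to $t\lambda_0$ on $M_{f_1}$, and a neighbourhood-of-$M_{f_1}$/complement compactness split of $\Gamma$ to produce one small $t>0$ that works globally; the paper merely phrases the compactness step as a finite open cover of $\Gamma$ and works with $|f_1+\mu\lambda f_2|$ rather than its square. One small clean-up: in your propagation step, joint continuity of $\psi_t$ alone is awkward because the target bound $-tc_0$ vanishes with $t$; it is tidier to note that $\psi_t(z)\le t\,b_t(z)$ on all of $\Gamma$ (since $|f_1|\le\|f_1\|$ there) and then apply continuity and compactness to $b_t$, which is $\le -c_0$ on $M_{f_1}\times[0,1]$.
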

	
	\begin{proof}
		Let $ A= \{ (f_1(z), f_2(z)) : z\in M_{f_1} \} $. First we prove the necessity. Suppose by contradiction that $ A $ is not an orthogonality covering set. Then there exists $ \lambda\in \mathbb{C} $ such that 
		\begin{align*}
			| f_1(z) + \lambda f_2(z) | < |f_1(z)| ~~ \textit{for~ all~ } z\in {M}_{f_1}.
		\end{align*}
		
		\noindent We claim that $ | f_1(z) + \mu\lambda f_2(z) | < |f_1(z)| $ for all $ z\in M_{f_1} $ and for all $ \mu \in (0,1) $. Indeed for $ z_0\in M_{f_1} $ and $ \mu_0\in (0,1) $,
		\begin{align*}
			f_1(z_0) + \mu_0\lambda f_2(z_0) & =  (1-\mu_0)f_1(z_0) + \mu_0(f_1(z_0)+\lambda f_2(z_0)).\\
		\end{align*}
		The claim now follows:
		\begin{align*}
			|f_1(z_0) + \mu_0\lambda f_2(z_0)| & = | (1-\mu_0)f_1(z_0) + \mu_0(f_1(z_0)+\lambda f_2(z_0))|,\\
			& < (1-\mu_0)|f_1(z_0)| + \mu_0|f_1(z_0)|,\\
			& = |f_1(z_0)|.
		\end{align*}
		Now, let $ g: \Gamma\times [-1,1] \longrightarrow \mathbb{R} $ be defined by
		\begin{align*}
			g(z, \mu) := | f_1(z) + \mu \lambda f_2(z)|,\;z\in\Gamma,\;-1\leq\mu\leq1.
		\end{align*}
		Clearly, $ g $ is continuous. Let $ z_0\in {M}_{f_1} $ and $\mu_{z_0}\in(0,1)$. Then from the claim,
		\begin{align*}
			g(z_0, \mu_{z_0}) < \|f_1\|.
		\end{align*}
		From the continuity of $ g $, there exist $ \epsilon_{z_0} > 0 $ and $ \delta_{z_0} > 0 $ such that
		\begin{align*}
			g(z, \sigma) < \| f_1\|~for~all\;\;\; (z, \sigma) \in B(z_0, \epsilon_{z_0}) \times ( \mu_{z_0}-\delta_{z_0}, \mu_{z_0}+\delta_{z_0}).
		\end{align*}
		In particular, $ g(z, \mu_{z_0}) < \| f_1\|~ \mbox { for all}~ z \in B(z_0, \epsilon_{z_0}) $. Thus, clearly from our claim, 
		\begin{align*}
			g(z, \sigma) < \| f_1\|\;for~all\;\;\;(z, \sigma) \in B(z_0, \epsilon_{z_0})\times (0, \mu_{z_0}).
		\end{align*} 
		Similarly, if $ u_0 \in \Gamma \setminus {M}_{f_1}$, then $ g(u_0, 0) < \| f_1 \| $. Again, from the continuity of $ g $, there exist $ \epsilon_{u_0} > 0 $ and $ \delta_{u_0} > 0 $ such that
		\begin{align*}
			g(z, \alpha) < \| f_1\|~for~all\;\;\;(z, \alpha) \in B(u_0, \epsilon_{u_0}) \times ( -\delta_{u_0}, \delta_{u_0}).
		\end{align*}
		Now,
		\begin{align*}
			\Gamma \subseteq \bigcup\limits_{z\in {M}_{f_1}}  B(z, \epsilon_z)  \cup \bigcup\limits_{u\in \Gamma\setminus {M}_{f_1}} B(u, \epsilon_u).
		\end{align*}
		Since $ \Gamma $ is compact, there exist $z_1,z_2,\dots,z_{k_1}\in M_{f_1}$ and $u_1,u_2,\dots,u_{k_2}\in\Gamma\backslash M_{f_1}$ such that\\
		\begin{align*}
			\Gamma \subseteq\bigcup\limits_{i=1}^{k_1}  B(z_i, \epsilon_{z_i}) \cup \bigcup\limits_{j=1}^{k_2} B(u_j, \epsilon_{u_j}).
		\end{align*}
		Choose $ \sigma_0 >0$ such that $\sigma_0<\mu_{z_i}$ and $\sigma_0<\delta_{u_j}$ for all $1\leq i\leq k_1$ and $1\leq j\leq k_2$.
		Clearly, $ (f_1 + \sigma_0 \lambda f_2) \in \mathcal{H}ol(\Gamma\cup \Int(\Gamma)) $. Let $ w_0 \in {M}_{(f_1 + \sigma_0 \lambda f_2)} $. Now, either $ w_0 \in \bigcup\limits_{i=1}^{k_1}  B(z_i, \epsilon_{z_i}) $, or $ w_0\in \bigcup\limits_{j=1}^{k_2} B(u_j, \epsilon_{u_j}) $. However, in either case, from the choice of $ \sigma_0 $, 
		\begin{align*}
			\| f_1 + \sigma_0 \lambda f_2 \| = | f_1(w_0) + \sigma_0 \lambda f_2( w_0 ) | < \| f_1 \|,
		\end{align*}
		proving $ f_1\not \perp_B f_2 $.\\
		We now prove the sufficiency. Let $ \kappa\in \mathbb{C} $ be arbitrary. Since $ A $ is an orthogonality covering set, there exists $ v\in {M}_{f_1} $ such that $ |f_1(v) + \kappa f_2(v) | \geq | f_1(v) | $. Thus,
		\begin{align*}
			\| f_1 + \kappa f_2 \| = \underset{z\in \Gamma}{\sup }|f_1(z) + \kappa f_2(z) | 
			\geq |f_1(v) + \kappa f_2(v) |
			\geq \| f_1 \|.
		\end{align*}
		Since $ \kappa $ was chosen arbitrarily, $ f_1 \perp_B f_2 $.
	\end{proof}
	
	Next, we characterize the smoothness of an element in any nowhere vanishing, point separating sub-algebra of $\mathcal{H}ol(\Gamma\cup \Int(\Gamma))$.

	\begin{theorem}\label{smooth point}
		Let $ \Gamma\subset\mathbb{C} $ be a simple closed curve, and $\mathfrak{A}$ be a nowhere vanishing, point separating sub-algebra of $\mathcal{H}ol(\Gamma\cup \Int(\Gamma))$. Let $ f  \in \mathfrak{A} $ be non-zero. Then $ f $ is a smooth point in $ \mathcal{H}ol(\Gamma\cup \Int(\Gamma)) $ if and only if $ {M}_f $ is a singleton set. 
	\end{theorem}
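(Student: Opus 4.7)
My plan is to combine Theorem \ref{characterization} with James's characterization of smoothness, which says that a non-zero $f$ is smooth if and only if Birkhoff-James orthogonality is right-additive at $f$, that is, $f\perp_B g_1$ and $f\perp_B g_2$ imply $f\perp_B(g_1+g_2)$. With this criterion in hand, both directions fall out of Theorem \ref{characterization}, Example \ref{singleton covering set}, and the interpolation trick already established in the proof of Theorem \ref{extreme}.

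For the sufficiency ($M_f=\{z_0\}$ implies $f$ smooth), I would assume $f\perp_B g_1$ and $f\perp_B g_2$. Theorem \ref{characterization} then gives that each singleton $\{(f(z_0),g_i(z_0))\}$ is an orthogonality covering set, and since $|f(z_0)|=\|f\|>0$, Example \ref{singleton covering set} forces $g_i(z_0)=0$ for $i=1,2$. Then $(g_1+g_2)(z_0)=0$, so the same Example shows that $\{(f(z_0),(g_1+g_2)(z_0))\}$ is an orthogonality covering set, whence $f\perp_B(g_1+g_2)$ by Theorem \ref{characterization}, and James's criterion yields smoothness.

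For the necessity I would argue contrapositively: assuming that $z_1,z_2\in M_f$ are distinct, I would exhibit two distinct elements of $J(f)$. The natural candidates are
\begin{align*}
\Psi_i(h):=\overline{\sgn(f(z_i))}\,h(z_i),\qquad h\in\mathfrak{A},\ i=1,2.
\end{align*}
Each $\Psi_i$ is complex-linear with $\|\Psi_i\|\le 1$ (since $|h(z_i)|\le\|h\|$), and $\Psi_i(f)=|f(z_i)|=\|f\|$, so $\Psi_i\in J(f)$. To see that $\Psi_1\neq\Psi_2$, I would invoke the interpolation trick from the proof of Theorem \ref{extreme}: the nowhere-vanishing and point-separating hypotheses on $\mathfrak{A}$ produce an $h\in\mathfrak{A}$ with $h(z_1)=1$ and $h(z_2)=0$, and then $\Psi_1(h)\neq 0=\Psi_2(h)$.

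The heavy lifting has already been carried out in Theorems \ref{extreme} and \ref{characterization}, so no step here looks like a genuine obstacle. The only matter requiring care is the routine verification that the candidate functionals $\Psi_i$ are indeed bounded, complex-linear, and norming for $f$, so that they actually lie in $J(f)\subset S_{\mathfrak{A}^*}$.
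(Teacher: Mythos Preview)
Your proof is correct and follows essentially the same approach as the paper: the necessity is handled by exhibiting two distinct evaluation functionals in $J(f)$ (the paper's distinctness argument is terser, but your interpolation is exactly what justifies it), and the sufficiency combines Theorem \ref{characterization} with Example \ref{singleton covering set} via James's right-additivity criterion. The only cosmetic difference is that the paper concludes $f\perp_B(g_1+g_2)$ by citing Proposition \ref{argument}(ii) rather than reapplying Theorem \ref{characterization}, which amounts to the same thing.
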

	
	\begin{proof}
		We first prove the necessity. Suppose by contradiction that $ z_1\neq  z_2\in M_f $. Define $\Psi,\Phi:\mathfrak{A}\to\mathbb{C}$ by\\
		\begin{align*}
			\Psi(h):=\overline{\sgn(f(z_1))}h(z_1),\quad\Phi(h):=\overline{\sgn(f(z_2))}h(z_2),\;h\in\mathfrak{A}.
		\end{align*}
		Then $\Psi$ and $\Phi$ are two support functionals of $f$. Also, $\Psi\neq\Phi$ since $\mathfrak{A}$ is nowhere vanishing and point separating proving $f$ is not a smooth point of $\mathfrak{A}$.\\
		To prove the sufficiency, let $ {M}_f = \{ z_1 \} $. Let $ f\perp_B g $ and $ f\perp_B h $ for some $ g, h \in \mathcal{H}ol(\Gamma\cup \Int(\Gamma)) $. If either of $ g, h $ is zero, then trivially $ f\perp_B (g+h) $. Let $ g,h$ be non-zero. From Theorem \ref{characterization}, $ \{ (f(z), g(z)): z \in {M}_f \} $ is a singleton orthogonality covering set. Therefore, by Example \ref{singleton covering set}, $ g(z_1) = 0 $. A similar argument shows $ h(z_1) = 0 $. Hence $ z_1\in \mathcal{Z}_{g+h} $ and consequently by Proposition \ref{argument}, $ f\perp_B (g+h) $.
	\end{proof}

	\section{$\mathcal{H}ol(\Gamma\cup \Int(\Gamma))$ and zeros of holomorphic functions}
	We begin this section with a simple observation:
	\begin{prop}\label{J-gam}
		Let $ \Gamma\subset\mathbb{C} $ be a simple closed curve. Let $ f\in \mathcal{J}(\Gamma)$ (see \eqref{everywhere norming}). Then either $ f $ is constant on $ \Gamma\cup\Int(\Gamma) $ or $ f $ has a zero enclosed by $ \Gamma $.
	\end{prop}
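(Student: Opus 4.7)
The plan is to argue by contradiction (or contrapositive): assume that $f$ has no zero enclosed by $\Gamma$ and show that $f$ must be constant on $\Gamma\cup\Int(\Gamma)$. The main tool will be the maximum modulus principle, applied both to $f$ and to $1/f$.

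First I would dispose of the trivial case $\|f\|=0$. In that case $f\equiv 0$ on $\Gamma$, and since $f$ is holomorphic on a connected neighbourhood of $\Gamma\cup\Int(\Gamma)$ (which accumulates at points of $\Gamma$), the identity theorem forces $f\equiv 0$ on $\Gamma\cup\Int(\Gamma)$, so $f$ is constant and we are done. In particular, a constant equal to $0$ has a zero enclosed by $\Gamma$, but the statement is phrased as an ``or'', so constancy suffices.

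Next I assume $\|f\|>0$ and that $f$ has no zero enclosed by $\Gamma$, i.e.\ $\mathcal{Z}_f\cap\Int(\Gamma)=\emptyset$. The hypothesis $|f(z)|=\|f\|>0$ for every $z\in\Gamma$ also shows that $f$ has no zero on $\Gamma$ itself. Hence $f$ is nowhere vanishing on some open neighbourhood $U$ of $\Gamma\cup\Int(\Gamma)$ (shrinking the original domain of $f$ if necessary, using continuity and compactness of $\Gamma\cup\Int(\Gamma)$), so $1/f$ is holomorphic on $U$. The maximum modulus principle applied to $f$ on the bounded domain $\Int(\Gamma)$ with continuous extension to $\Gamma$ gives $|f(z)|\le\|f\|$ for all $z\in\Int(\Gamma)$. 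Applied to $1/f$ on the same domain it yields $|1/f(z)|\le 1/\|f\|$, that is, $|f(z)|\ge\|f\|$ for all $z\in\Int(\Gamma)$. Combining the two inequalities, $|f|$ is identically $\|f\|$ on $\Gamma\cup\Int(\Gamma)$.

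From $|f|\equiv\|f\|$ on the open connected set $\Int(\Gamma)$, the open mapping theorem (or equivalently another application of the maximum modulus principle, since $f$ attains an interior maximum) forces $f$ to be constant on $\Int(\Gamma)$, and then by continuity also on $\Gamma$. Thus either $f$ vanishes somewhere in $\Int(\Gamma)$ or $f$ is constant on $\Gamma\cup\Int(\Gamma)$, which is exactly the dichotomy claimed. The only subtle point, and the one I would be careful about, is the verification that $\Int(\Gamma)$ is a bounded simply connected domain with boundary $\Gamma$ on which the maximum modulus principle applies; this is precisely the Jordan curve theorem together with the fact that $f$ extends holomorphically to a neighbourhood of the closure.
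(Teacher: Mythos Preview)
Your proof is correct and follows essentially the same route as the paper: assume $f$ has no zero in $\Int(\Gamma)$, use the maximum modulus principle on $f$ and on $1/f$ (the paper packages the latter as the ``minimum modulus principle'') to conclude $|f|\equiv\|f\|$ on $\Int(\Gamma)$, and then invoke the open mapping theorem to force $f$ constant. Your version is in fact more carefully written, since you separately handle the degenerate case $\|f\|=0$ and explicitly justify why $1/f$ is holomorphic on a neighbourhood of $\Gamma\cup\Int(\Gamma)$.
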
 
	
	\begin{proof}
		Suppose by contradiction that $ f $ is non-zero on $\Omega':=\Int(\Gamma)$. Hence, the minimum modulus principle yields 
		\begin{align*}
			\underset {z\in \Gamma\cup\Omega'}{\min}|f(z)| =\min\limits_{z\in\Gamma}|f(z)|= \| f\|.
		\end{align*} 
		Thus, $ f(\Omega')\subseteq \{ \|f\|e^{i\theta}: \theta \in [0, 2\pi) \}$. Clearly, $f(\Omega')$ is not open in $ \mathbb{C} $ while $\Omega'$ is and hence, $ f $ must be constant on $ \Omega$. 
	\end{proof}

	The first result of this section shows that Birkhoff-James orthogonality in nowhere vanishing, point separating sub-algebras of $\mathcal{H}ol(\Gamma\cup \Int(\Gamma))$ has a deep connection with the zeros of holomorphic functions.
	
	\begin{theorem}\label{Rouche's theorem}
		Let $ \Gamma\subset\mathbb{C} $ be a simple closed curve, and $\mathfrak{A}$ be a nowhere vanishing, point separating sub-algebra of $\mathcal{H}ol(\Gamma\cup \Int(\Gamma))$. Let $ f\in \mathcal{J}(\Gamma)\cap\mathfrak{A} $ and let $ g\in \mathfrak{A}$. Then $ f \not\perp_B g $ implies $ f $ and $ g $ have the same number of zeros enclosed by $ \Gamma $.
	\end{theorem}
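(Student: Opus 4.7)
The plan is to use Theorem \ref{characterization} to extract a strict Rouch\'e-type modulus inequality on $\Gamma$ from the hypothesis $f\not\perp_B g$, and then apply Rouch\'e's theorem directly. First, since $f\in\mathcal{J}(\Gamma)$, definition \eqref{everywhere norming} forces $M_f=\Gamma$. Theorem \ref{characterization} therefore converts $f\not\perp_B g$ into the statement that
\[
A:=\{(f(z),g(z)):z\in\Gamma\}\subset\mathbb{C}^2
\]
is \emph{not} an orthogonality covering set. Unwinding that definition, this amounts to the existence of some $\lambda_0\in\mathbb{C}$ such that
\[
|f(z)+\lambda_0 g(z)|<|f(z)|\quad\text{for every }z\in\Gamma.
\]

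Next, I would record a few cheap consequences of this strict inequality. If $f\equiv 0$ on $\Gamma\cup\Int(\Gamma)$ the statement is vacuous, so assume $\|f\|>0$; then $|f|\equiv\|f\|>0$ on $\Gamma$, so $f$ is zero-free on $\Gamma$. A zero of $g$ at some $z_0\in\Gamma$ would force equality $|f(z_0)+\lambda_0 g(z_0)|=|f(z_0)|$, contradicting the strict inequality, so $g$ is zero-free on $\Gamma$ as well. Similarly $\lambda_0\neq 0$, else $|f(z)|<|f(z)|$ at every $z\in\Gamma$.

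Finally, I apply Rouch\'e's theorem to the pair $F:=-\lambda_0 g$ and $G:=f$, both holomorphic on a neighbourhood of $\Gamma\cup\Int(\Gamma)$ and zero-free on $\Gamma$. On $\Gamma$ we have
\[
|F(z)-G(z)|=|f(z)+\lambda_0 g(z)|<|f(z)|=|G(z)|,
\]
so Rouch\'e's theorem gives that $F$ and $G$ have the same number of zeros, counted with multiplicity, enclosed by $\Gamma$. Because $\lambda_0\neq 0$, the zero set of $F=-\lambda_0 g$ coincides, with multiplicities, with that of $g$, and the claim follows.

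I do not foresee any real obstacle. The entire argument rests on two ingredients already in place: Theorem \ref{characterization}, which translates non-orthogonality into the desired strict inequality on $\Gamma$, and the classical Rouch\'e theorem. The only subtle point worth writing down is the boundary zero-freeness of $f$ and $g$ that Rouch\'e requires, and this falls out of the same strict inequality.
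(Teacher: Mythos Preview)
Your argument is correct and follows the same line as the paper's: obtain a strict pointwise inequality $|f(z)+\lambda_0 g(z)|<|f(z)|$ on $\Gamma$ and then invoke Rouch\'e. The only difference is that the paper bypasses Theorem~\ref{characterization} entirely, reading off the inequality directly from the definition of $\perp_B$ (some $\lambda$ with $\|f+\lambda g\|<\|f\|$) together with $|f|\equiv\|f\|$ on $\Gamma$; your detour through the orthogonality-covering-set characterization is harmless but unnecessary.
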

	
	\begin{proof}
		Since $ f\not\perp_B g $, there exists $ \lambda \in \mathbb{C}\setminus \{ 0 \} $ such that $ \| f+\lambda g \| < \|f \| $. Now, since $f\in\mathcal{J}(\Gamma)$,
		\begin{align*}
			|f(z)+\lambda g(z)|\leq\|f+\lambda g\|<\|f\|=|f(z)|,\;\forall\;\; z\in\Gamma.
		\end{align*}
		Since, $ f $ and $ -\lambda g $ are holomorphic within and on $ \Gamma $, by Rouche's Theorem, they have the same number of zeros enclosed by $ \Gamma $, establishing the result.
	\end{proof}
	
	The converse of the above theorem is not true as illustrated in the following example:
	\begin{example}
		Let $ \Gamma:=\{ z \in \mathbb{C} : |z| = 1 \} $. Let $ f_1(z) = z $ and let $ f_2(z) = z(z-1) $. Clearly, $ f_1\in \mathcal{J}(\Gamma) $. Since $ 1\in {M}_{f_1} \cap \mathcal{Z}_{f_2}$, by Proposition \ref{argument}, $ f_1\perp_B f_2 $. However, $ f_1 $ and $ f_2 $ have the same number of zeros enclosed by $ \Gamma $.   
	\end{example}
	
	It follows from the above theorem that if $\mathfrak{A}$ is any nowhere vanishing, point separating sub-algebra of $\mathcal{H}ol(\Gamma\cup\Int(\Gamma))$ with $f\in\mathcal{J}(\Gamma)\cap\mathfrak{A}$, then for any $g\in\mathfrak{A}$ with $f$ and $g$ having different number of zeros enclosed by $\Gamma$, $f\perp_Bg$. We record this observation in form of the following theorem.
	
	\begin{theorem}\label{another form}
		Let $ \Gamma\subset\mathbb{C} $ be a simple closed curve, and $\mathfrak{A}$ be a nowhere vanishing, point separating sub-algebra of $\mathcal{H}ol(\Gamma\cup \Int(\Gamma))$. Let $ f\in \mathcal{J}(\Gamma)\cap\mathfrak{A} $ and let $ g\in \mathfrak{A} $. If $ f $ and $ g $ have different number of zeros enclosed by $ \Gamma $, then $ f\perp_B g $.
	\end{theorem}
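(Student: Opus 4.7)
The plan is to observe that this statement is nothing but the contrapositive of Theorem \ref{Rouche's theorem}, and so the proof is a one-line logical deduction. Specifically, Theorem \ref{Rouche's theorem} asserts the implication
\begin{align*}
f\not\perp_B g \;\Longrightarrow\; f\text{ and }g\text{ have the same number of zeros enclosed by }\Gamma,
\end{align*}
under exactly the hypotheses $f\in\mathcal{J}(\Gamma)\cap\mathfrak{A}$ and $g\in\mathfrak{A}$. The contrapositive reads
\begin{align*}
f\text{ and }g\text{ have different number of zeros enclosed by }\Gamma \;\Longrightarrow\; f\perp_B g,
\end{align*}
which is precisely the conclusion of the present theorem.

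Concretely, I would argue by contradiction: assume the hypotheses of the theorem hold but $f\not\perp_B g$; apply Theorem \ref{Rouche's theorem} to conclude that $f$ and $g$ must have the same number of zeros enclosed by $\Gamma$; this contradicts the standing assumption that they have different numbers of zeros. Therefore $f\perp_B g$.

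There is no genuine obstacle here — the entire content (namely the use of Rouch\'e's theorem together with the fact that $f\in\mathcal{J}(\Gamma)$ forces $|f(z)|=\|f\|$ on all of $\Gamma$) has already been absorbed into Theorem \ref{Rouche's theorem}. The only thing worth noting is that the hypothesis $f\in\mathcal{J}(\Gamma)\cap\mathfrak{A}$ is essential for invoking the previous theorem, and this is exactly what is assumed. Hence the proof is essentially a remark, and I would present it as a two- or three-line argument explicitly calling out the contrapositive.
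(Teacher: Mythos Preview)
Your proposal is correct and matches the paper's approach exactly: the paper does not give a separate proof either, but simply records this theorem as the contrapositive of Theorem~\ref{Rouche's theorem}, noting in the paragraph preceding the statement that ``It follows from the above theorem\ldots''.
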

	
	It is easy to see that Corollary \ref{orthonormal basis} is also a direct consequence of Theorem \ref{another form}. We derive from Theorem \ref{another form} the following important inequality regarding polynomials.
	
	\begin{cor}\label{inequality}
		Let $ \Gamma:=\{ z \in \mathbb{C} : |z| = r \} $. Let $ h_j(z):=z^j,\;z\in\mathbb{C}$ for $j\in\mathbb{N}$. Then
		\begin{align*}
			\underset{\lambda\in \mathbb{C}}{\inf} \underset{z\in \Gamma}{\max}|h_n(z) + \lambda Q_{m}(z) | \geq r^n \textit{~for~ every ~polynomial~} {Q_m} ~\textit{of~ degree~}{m<n}.
		\end{align*}
	\end{cor}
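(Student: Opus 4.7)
The plan is to apply Theorem \ref{another form} directly to $f = h_n$ and $g = Q_m$, in the sub-algebra $\mathfrak{A} = \mathcal{H}ol(\Gamma\cup\Int(\Gamma))$ itself (which is trivially nowhere vanishing and point separating since it contains the constant function $1$ and the identity map $z\mapsto z$).

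First I would verify the hypotheses. Since $|h_n(z)| = |z|^n = r^n$ for every $z\in\Gamma$, we have $h_n\in\mathcal{J}(\Gamma)\cap\mathfrak{A}$ with $\|h_n\| = r^n$. Next, I would count zeros enclosed by $\Gamma$: the function $h_n(z)=z^n$ has exactly $n$ zeros enclosed by $\Gamma$ (the origin, counted with multiplicity $n$, which is interior since $r>0$), whereas any polynomial $Q_m$ of degree $m<n$ has at most $m$ zeros in $\mathbb{C}$ when counted with multiplicity, hence at most $m<n$ zeros enclosed by $\Gamma$. Consequently $h_n$ and $Q_m$ have a different number of zeros enclosed by $\Gamma$.

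By Theorem \ref{another form}, this forces $h_n \perp_B Q_m$, which by definition means
\begin{align*}
\max_{z\in\Gamma}|h_n(z)+\lambda Q_m(z)| = \|h_n + \lambda Q_m\| \geq \|h_n\| = r^n \qquad\text{for every }\lambda\in\mathbb{C}.
\end{align*}
Taking the infimum over $\lambda\in\mathbb{C}$ then yields the claimed inequality. There is really no obstacle here; the whole point is that Corollary \ref{inequality} is essentially just a restatement of Birkhoff--James orthogonality of the monomial $h_n$ with any lower-degree polynomial, once one notices the zero-count mismatch. The only edge case to mention is that if $Q_m\equiv 0$ one interprets the inequality trivially, since $\max_{z\in\Gamma}|h_n(z)|=r^n$.
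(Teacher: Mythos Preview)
Your proposal is correct and follows essentially the same route as the paper: verify $h_n\in\mathcal{J}(\Gamma)$, observe that $h_n$ has $n$ zeros enclosed by $\Gamma$ while $Q_m$ has at most $m<n$, invoke Theorem~\ref{another form} to obtain $h_n\perp_B Q_m$, and read off the inequality from the definition of Birkhoff--James orthogonality. Your write-up is in fact slightly more explicit than the paper's in justifying the zero-count discrepancy and in handling the degenerate case $Q_m\equiv 0$.
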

	
	\begin{proof}
		Clearly, $ h_j\in \mathcal{J}(\Gamma) $ for every $ j\in \mathbb{N} $. Now, $ Q_m $ and $ h_n $ have different number of zeros enclosed by $ \Gamma $. Therefore, by Theorem \ref{another form}, $ h_n \perp_B P _m $ in $\mathcal{H}ol(\Gamma\cup\Int(\Gamma))$ for all $ m < n $. Thus, we have for every $\lambda\in\mathbb{C}$,
		\begin{align*}
			\max\limits_{z\in\Gamma}|h_n(z)+\lambda Q_m(z)|=\|h_n+\lambda Q_m\|\geq\|h_n\|=r^n,
		\end{align*}
		and the result follows.
	\end{proof}
	
	Our next result relates Birkhoff-James orthogonality in $\mathcal{H}ol(\Gamma\cup \Int(\Gamma))$ for a simple closed curve $\Gamma\subset\mathbb{C}$ with the fundamental theorem of algebra.
	
	\begin{theorem}\label{Polynomial}
		Let $ \Gamma:=\{ z \in \mathbb{C} : |z| = r \} $ and $ h_j(z) := z^j,\; z\in \mathbb{C} $ for $ j\in \mathbb{N} $. If $ Q(z): = a_nz^n+a_{n-1}z^{n-1}+ \dots + a_0,\, z\in \mathbb{C}$ with $a_n\neq0$ and $ r> \max\{1,\; \frac{1}{|a_n|}{\sum\limits_{k=0}^{n-1} |a_k|}\} $, then $ h_n\not\perp_B P $ in $\mathcal{H}ol(\Gamma\cup\Int(\Gamma))$.
	\end{theorem}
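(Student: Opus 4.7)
The plan is to produce an explicit witness $\lambda \in \mathbb{C}$ for which $\|h_n + \lambda Q\| < \|h_n\| = r^n$, which by the very definition of Birkhoff--James orthogonality immediately gives $h_n \not\perp_B Q$. The natural choice is the scalar that kills the leading coefficient of $Q$, namely $\lambda := -1/a_n$. This is the only real idea in the proof; once this $\lambda$ is chosen, everything else is a direct estimate on $\Gamma$.

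With $\lambda = -1/a_n$, a line of algebra gives
\begin{align*}
h_n(z) + \lambda Q(z) = z^n - \frac{1}{a_n}\bigl(a_n z^n + a_{n-1}z^{n-1} + \cdots + a_0\bigr) = -\frac{1}{a_n}\sum_{k=0}^{n-1} a_k z^k,
\end{align*}
so $h_n + \lambda Q$ is a polynomial of degree at most $n-1$. On $\Gamma = \{|z| = r\}$, the triangle inequality then yields
\begin{align*}
|h_n(z) + \lambda Q(z)| \leq \frac{1}{|a_n|}\sum_{k=0}^{n-1} |a_k|\, r^k.
\end{align*}

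Since $r > 1$, each $r^k \leq r^{n-1}$ for $0 \leq k \leq n-1$, so the sum on the right is at most $\frac{r^{n-1}}{|a_n|}\sum_{k=0}^{n-1}|a_k|$. Applying the hypothesis $r > \frac{1}{|a_n|}\sum_{k=0}^{n-1}|a_k|$ one more time converts this into the strict bound
\begin{align*}
\|h_n + \lambda Q\| \leq \frac{r^{n-1}}{|a_n|}\sum_{k=0}^{n-1}|a_k| < r^{n-1} \cdot r = r^n = \|h_n\|,
\end{align*}
which is exactly what we need. There is no serious obstacle: the only subtle point is making sure both hypotheses $r > 1$ and $r > \frac{1}{|a_n|}\sum_{k=0}^{n-1}|a_k|$ are used, the first to reduce $r^k$ to $r^{n-1}$ uniformly and the second to squeeze out strict inequality. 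Conceptually, this is just the Rouché/Cauchy-type observation that these hypotheses force all zeros of $Q$ to lie inside $\Gamma$; the present statement repackages that fact through Birkhoff--James orthogonality rather than through the number of enclosed zeros.
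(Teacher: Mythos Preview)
Your proof is correct and is essentially identical to the paper's: both pick the witness $\lambda=-1/a_n$ (the paper phrases it as bounding $\|a_nh_n-Q\|$ and then dividing by $|a_n|$), apply the triangle inequality on $\Gamma$, use $r>1$ to dominate each $r^k$ by $r^{n-1}$, and then invoke $r>\frac{1}{|a_n|}\sum_{k=0}^{n-1}|a_k|$ for the strict inequality $\|h_n-\tfrac{1}{a_n}Q\|<r^n=\|h_n\|$.
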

	
	\begin{proof}
		Since $ r> \max\{1,\;\frac{1}{|a_n|}{\sum\limits_{k=0}^{n-1} |a_k|}\} $, 
		\begin{align*}
			\| {a_nh_n}- Q\| &  = \| -(a_{n-1}h_{n-1} + a_{n-2}h_{n-2} + \dots + a_0) \|\\
			& \leq  \| a_{n-1}h_{n-1} \| + \| a_{n-2}h_{n-2} \| + \dots +\|a_1h_1\|+ | a_0 |\\
			& \leq r^{n-1} \{ |a_{n-1} | +| a_{n-2} | + \dots + |a_0| \}\\
			& < r^{n} |a_n|\\
			& = |a_n|\| h_n \|.
		\end{align*}
		Thus, $ \| {h_n}-\frac{1}{a_n} Q\| < \| h_n \| $, giving $ h_n \not \perp_B Q $.
	\end{proof}

	We now obtain the fundamental theorem of algebra:
	
	\begin{cor}[Fundamental Theorem of Algebra]
		A polynomial of degree $ n $ has $ n $ zeros in $ \mathbb{C} $.
	\end{cor}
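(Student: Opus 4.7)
The plan is to combine Theorem \ref{Polynomial} with the contrapositive of Theorem \ref{another form}. Given an arbitrary polynomial $Q(z) = a_n z^n + a_{n-1}z^{n-1} + \cdots + a_0$ with $a_n \neq 0$, the goal is to exhibit $n$ zeros of $Q$ in $\mathbb{C}$.

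First I would fix a radius $r > \max\{1,\ |a_n|^{-1}\sum_{k=0}^{n-1}|a_k|\}$ and set $\Gamma := \{z \in \mathbb{C} : |z| = r\}$, working inside $\mathfrak{A} := \mathcal{H}ol(\Gamma \cup \Int(\Gamma))$, which is a nowhere vanishing, point separating sub-algebra of itself (it contains the constant $1$ and the identity $h_1$). Since $|h_n(z)| = r^n = \|h_n\|$ for every $z \in \Gamma$, the monomial $h_n(z) = z^n$ lies in $\mathcal{J}(\Gamma) \cap \mathfrak{A}$. By Theorem \ref{Polynomial}, the choice of $r$ guarantees $h_n \not\perp_B Q$ in $\mathfrak{A}$.

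Now I would invoke the contrapositive of Theorem \ref{another form}: since $h_n \in \mathcal{J}(\Gamma) \cap \mathfrak{A}$ and $h_n \not\perp_B Q$, the functions $h_n$ and $Q$ must have the same number of zeros enclosed by $\Gamma$. The monomial $h_n$ has a single zero of multiplicity $n$ at the origin and no others, so $Q$ has exactly $n$ zeros in $\Int(\Gamma) \subset \mathbb{C}$.

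I do not anticipate any serious obstacle here; the substantive work has been done in Theorem \ref{Polynomial}, which supplies an explicit radius bounding all zeros of $Q$. The one subtlety worth flagging in the writeup is that the phrase \emph{same number of zeros} appearing in Theorems \ref{Rouche's theorem} and \ref{another form} is inherited from Rouch\'e's theorem and therefore counts zeros with multiplicity; without this, one would only recover the existence of $n$ distinct roots, which is false in general (as shown by $Q(z) = z^n$ itself).
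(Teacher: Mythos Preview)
Your proposal is correct and follows the paper's own argument essentially verbatim: fix $r$ above the stated bound, apply Theorem~\ref{Polynomial} to get $h_n\not\perp_B Q$, then use $h_n\in\mathcal{J}(\Gamma)$ together with Theorem~\ref{Rouche's theorem} (equivalently, the contrapositive of Theorem~\ref{another form}) to conclude $Q$ has $n$ zeros in $\Int(\Gamma)$. Your remark that the zero count inherits multiplicity from Rouch\'e's theorem is a useful clarification the paper leaves implicit.
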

	
	\begin{proof}
		Let $ Q(z) = a_nz^n+a_{n-1}z^{n-1}+ \dots + a_0 $, $a_n\neq0$ and $ r > \max\{1,\; \frac{1}{|a_n|}{\sum\limits_{k=0}^{n-1} |a_k|}\}$. Set $ \Gamma:=\{ z \in \mathbb{C} : |z| = r \} $ and $ h_j(z) := z^j, \; z\in \mathbb{C} $ for $ j\in \mathbb{N} $. Then by Theorem \ref{Polynomial}, $ h_n\not\perp_B Q $. Since $ h_n \in \mathcal{J}({\Gamma}) $, by Theorem \ref{Rouche's theorem}, $ h_n $ and $ Q $ have the same number of zeros enclosed by $ \Gamma $. Since $ r~ >~ \max~ \{ 1,~ \frac{1}{|a_n|}{\sum\limits_{k=0}^{n-1} |a_k|} \} $ was chosen arbitrarily, $ P $ has $ n $ number of zeros in $ \mathbb{C} $.
	\end{proof}
	
	\par
	
	Finally, we obtain a result regarding Birkhoff-James orthogonality between the $ n $-th order derivatives of two holomorphic functions in $\mathcal{H}ol(\Gamma\cup \Int(\Gamma))$ for a simple closed curve $\Gamma\subset\mathbb{C}$. The result also has a connection between Birkhoff-James orthogonality and zeros of the $ n $-th order derivatives of holomorphic functions.
	
	\begin{theorem}\label{derivative}
		Let $\Gamma_1,\Gamma_2\subset\mathbb{C}$ be two simple closed curves such that $\Gamma_2\subset\Int(\Gamma_1)$ and fix $0<r<\dist(\Gamma_1,\Gamma_2)$. Let $ f, g \in \mathcal{H}ol(\Gamma_1\cup\Int(\Gamma_1)) $ be such that $ g^{(n)} \neq 0 $ for some fixed natural number $ n $ and
		\begin{equation*}
			\underset{z\in \Gamma_1}{\max} |f(z)+\lambda_0 g(z)| ~< ~\frac{r^n}{n!}~\underset{z\in \Gamma_2}{\max}~~| f^{(n)}(z)| \textit{~for~some ~}\lambda_0\in \mathbb{C}. 
		\end{equation*}
		Then $ f^{(n)} \not\perp_B g^{(n)} $ in any nowhere vanishing, point separating sub-algebra of $ \mathcal{H}ol(\Gamma_2\cup\Int(\Gamma_2))$ containing both the functions.  
	\end{theorem}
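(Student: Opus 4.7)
The plan is to let $\lambda_0$ itself be the witness and show the strict inequality $\|f^{(n)}+\lambda_0 g^{(n)}\|_{\Gamma_2}<\|f^{(n)}\|_{\Gamma_2}$. Since the norm on any subalgebra of $\mathcal{H}ol(\Gamma_2\cup\Int(\Gamma_2))$ is the restriction of the supremum norm on $\Gamma_2$, this inequality immediately implies $f^{(n)}\not\perp_B g^{(n)}$ in every such subalgebra containing $f^{(n)}$ and $g^{(n)}$.

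Setting $h:=f+\lambda_0 g\in\mathcal{H}ol(\Gamma_1\cup\Int(\Gamma_1))$, one has $h^{(n)}=f^{(n)}+\lambda_0 g^{(n)}$. For any $z\in\Gamma_2$, the condition $r<\dist(\Gamma_1,\Gamma_2)$ guarantees that the closed disc $\overline{D(z,r)}$ is contained in $\Int(\Gamma_1)$, and I would apply Cauchy's integral formula for the $n$-th derivative along the circle $|w-z|=r$ to obtain the classical Cauchy estimate
\begin{equation*}
|h^{(n)}(z)|\leq\frac{n!}{r^n}\max_{|w-z|=r}|h(w)|.
\end{equation*}
The maximum modulus principle, applied to $h$ on $\Gamma_1\cup\Int(\Gamma_1)$ and using $\overline{D(z,r)}\subset\Int(\Gamma_1)$, bounds the right-hand side by $\|h\|_{\Gamma_1}$, so invoking the hypothesis yields
\begin{equation*}
|h^{(n)}(z)|\leq\frac{n!}{r^n}\|h\|_{\Gamma_1}<\max_{w\in\Gamma_2}|f^{(n)}(w)|
\end{equation*}
for every $z\in\Gamma_2$.

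Since this strict inequality holds pointwise on $\Gamma_2$, the compactness of $\Gamma_2$ together with the continuity of $|h^{(n)}|$ guarantees that the supremum on the left is attained and therefore still strictly less than $\|f^{(n)}\|_{\Gamma_2}$, completing the argument. I do not anticipate any substantive obstacle: the constant $r^n/n!$ appearing in the hypothesis is calibrated precisely to the Cauchy estimate, so the proof is essentially a matching of constants. The only mildly delicate point is that passing from a strict pointwise inequality to a strict inequality for the supremum needs compactness of $\Gamma_2$; without that one could only conclude a non-strict bound, which would be insufficient to defeat Birkhoff-James orthogonality.
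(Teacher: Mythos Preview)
Your proposal is correct and follows essentially the same route as the paper: use $\lambda_0$ as the witness, apply the Cauchy estimate for the $n$-th derivative on a circle of radius $r$ centered at a point of $\Gamma_2$, bound via the maximum modulus principle by $\max_{\Gamma_1}|f+\lambda_0 g|$, and invoke the hypothesis. The only cosmetic difference is that the paper first fixes a point $z_0\in\Gamma_2$ where $|f^{(n)}+\lambda_0 g^{(n)}|$ attains its maximum and applies the Cauchy estimate just there, whereas you apply it at every $z\in\Gamma_2$ and appeal to compactness at the end; the content is identical.
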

	Using Theorem \ref{Rouche's theorem}, we immediately derive the following:
	\begin{cor}
		Let $\Gamma_1,\;\Gamma_2,\; r$ be as in Theorem \ref{derivative}. Let $ f, g \in \mathcal{H}ol(\Gamma_1\cup\Int(\Gamma_1))$ be such that
		$ f^{(n)}\in \mathcal{J}({\Gamma_2}) $, $ g^{(n)} \neq 0 $ for some natural number $ n $ and
		\begin{align*}
			\underset{z\in \Gamma_1}{\max} |f(z)+\lambda_0 g(z)| ~< ~\frac{r^n}{n!}~\underset{z\in \Gamma_2}{\max}~~| f^{(n)}(z)| \textit{~for~some ~}\lambda_0\in \mathbb{C}. 
		\end{align*}
		Then $ f^{( n)} $ and $ g^{( n)} $ have the same number of zeros enclosed by $ \Gamma_2 $.
	\end{cor}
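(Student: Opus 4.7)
The approach is to convert the hypothesized bound on $\Gamma_1$ for $f+\lambda_0 g$ into a pointwise bound on $\Gamma_2$ for $f^{(n)}+\lambda_0 g^{(n)}=(f+\lambda_0 g)^{(n)}$, via Cauchy's integral formula on circles of radius $r$ centered at points of $\Gamma_2$. The parameter $r$ is tailor-made: since $r<\dist(\Gamma_1,\Gamma_2)$, the closed disk $\overline{D(z_0,r)}$ for any $z_0\in\Gamma_2$ avoids $\Gamma_1$, and because $z_0\in\Int(\Gamma_1)$ while the segment from $z_0$ to any point of this disk never crosses $\Gamma_1$, the whole disk sits inside $\Int(\Gamma_1)$. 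This is the first thing I would verify; it is the only geometric input needed.

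Next, since $f+\lambda_0 g\in\mathcal{H}ol(\Gamma_1\cup\Int(\Gamma_1))$ is holomorphic on a neighborhood of $\overline{D(z_0,r)}$, Cauchy's formula for derivatives gives
\begin{equation*}
(f+\lambda_0 g)^{(n)}(z_0)=\frac{n!}{2\pi i}\oint_{|\zeta-z_0|=r}\frac{f(\zeta)+\lambda_0 g(\zeta)}{(\zeta-z_0)^{n+1}}\,d\zeta,
\end{equation*}
so the standard $ML$-estimate yields
\begin{equation*}
|f^{(n)}(z_0)+\lambda_0 g^{(n)}(z_0)|\leq \frac{n!}{r^n}\max_{|\zeta-z_0|=r}|f(\zeta)+\lambda_0 g(\zeta)|.
\end{equation*}
Since the circle $|\zeta-z_0|=r$ lies in $\Gamma_1\cup\Int(\Gamma_1)$, the maximum modulus principle applied to $f+\lambda_0 g$ bounds the right-hand side by $\max_{z\in\Gamma_1}|f(z)+\lambda_0 g(z)|$, and this bound is uniform in $z_0\in\Gamma_2$.

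Combining with the hypothesis
$
\max_{z\in\Gamma_1}|f(z)+\lambda_0 g(z)|<\tfrac{r^n}{n!}\max_{z\in\Gamma_2}|f^{(n)}(z)|
$
gives $|f^{(n)}(z_0)+\lambda_0 g^{(n)}(z_0)|<\|f^{(n)}\|$ for every $z_0\in\Gamma_2$, where $\|\cdot\|$ is the supremum on $\Gamma_2$. Taking the supremum over $z_0\in\Gamma_2$ (which is attained by compactness but is not needed — the uniform bound suffices) yields $\|f^{(n)}+\lambda_0 g^{(n)}\|<\|f^{(n)}\|$, so $\lambda_0$ witnesses that $f^{(n)}\not\perp_B g^{(n)}$ in the ambient sub-algebra, and hence in any nowhere vanishing, point separating sub-algebra of $\mathcal{H}ol(\Gamma_2\cup\Int(\Gamma_2))$ that contains both functions. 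There is no real obstacle here: the argument is a clean two-step application of Cauchy's inequality plus maximum modulus, and the only care needed is the containment $\overline{D(z_0,r)}\subset\Int(\Gamma_1)$ guaranteed by the choice of $r$.
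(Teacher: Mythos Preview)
Your Cauchy-estimate argument is correct and is essentially the paper's proof of Theorem~\ref{derivative}: for each $z_0\in\Gamma_2$ you bound $|(f+\lambda_0 g)^{(n)}(z_0)|$ via Cauchy's formula on the circle of radius $r$, then push the bound out to $\Gamma_1$ by the maximum modulus principle, and combine with the hypothesis to get $\|f^{(n)}+\lambda_0 g^{(n)}\|<\|f^{(n)}\|$ on $\Gamma_2$. That part is fine.

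The gap is that you stop at $f^{(n)}\not\perp_B g^{(n)}$, which is the conclusion of Theorem~\ref{derivative}, not of the corollary. The statement you are asked to prove is that $f^{(n)}$ and $g^{(n)}$ have the same number of zeros enclosed by $\Gamma_2$. You never invoke the additional hypothesis $f^{(n)}\in\mathcal{J}(\Gamma_2)$, and without it the zero count does not follow. The missing step is exactly Theorem~\ref{Rouche's theorem} (or, unwinding it, classical Rouch\'e): since $f^{(n)}\in\mathcal{J}(\Gamma_2)$ one has $|f^{(n)}(z)|=\|f^{(n)}\|$ for all $z\in\Gamma_2$, so the inequality $\|f^{(n)}+\lambda_0 g^{(n)}\|<\|f^{(n)}\|$ upgrades to the pointwise bound
\[
|f^{(n)}(z)+\lambda_0 g^{(n)}(z)|<|f^{(n)}(z)|\quad\text{for every }z\in\Gamma_2,
\]
and Rouch\'e's theorem then gives that $f^{(n)}$ and $-\lambda_0 g^{(n)}$ (hence $g^{(n)}$) have the same number of zeros in $\Int(\Gamma_2)$. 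Add this sentence and the proof is complete; as written it proves the wrong statement.
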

	\begin{proof}[Proof of Theorem \ref{derivative}]
		We need to show that there exists some $ \lambda \in \mathbb{C} $ such that 
		\begin{align*}
			\max\limits_{z\in\Gamma_2}|  f^{(n)}(z)+\lambda g^{(n)} (z)| < \max\limits_{z\in\Gamma_2}| f^{(n)}(z)|.
		\end{align*}
		Since $ \Gamma_2 $ is compact,
		\begin{equation*}
			\underset{z\in \Gamma_2}{\max}|  f^{(n)}(z) + \lambda_0 g^{(n)}(z) | = |  f^{(n)}(z_0) + \lambda_0 g^{(n)}(z_0) | \textit{~for~some~}z_0\in \Gamma_2 .
		\end{equation*}
		Denote $\Int(\Gamma_1)$ by $\Omega'$. Let $ \gamma_r $ be the circle of radius $ r $ about $ z_0 $. From hypotheses, $ \gamma_r\subset \Omega' $. Now, by Cauchy's integral formula for the derivative, we have that
		\begin{align*}
			f^{(n)}(z_0) + \lambda_0 g^{(n)}(z_0) = \frac{n!}{2\pi i}\int\limits_{\gamma_r} \frac{f(z) + \lambda_0 g(z)}{(z-z_0)^{n+1}}dz.
		\end{align*}
		Taking modulus on both sides, we get
		\begin{align*}
			| f^{(n)}(z_0) + \lambda_0 g^{(n)}(z_0)|\leq \frac{n!}{r^n}~\underset{z\in \gamma_r}{\max}|{f(z) + \lambda_0 g(z)}|,
		\end{align*}
		since $\int\limits_{\gamma_r}\left|\frac{dz}{(z-z_0)^{n+1}}\right|=\frac{2\pi}{r^n}$. Invoking the maximum modulus principle and noting $ \gamma_r\subset \Omega'$ yields:
		\begin{align*}
			\underset{z\in \gamma_r}{\max}|{f(z) + \lambda_0 g(z)}| \leq \underset{z\in \Gamma_1}{\max}|{f(z) + \lambda_0 g(z)}|.
		\end{align*}
		Thus,
		\begin{align*}
			|f^{(n)}(z_0) + \lambda_0 g^{(n)}(z_0)| < \frac{n!}{r^n}~\underset{z\in \Gamma_1}{\max}|{f(z) + \lambda_0 g(z)}|.
		\end{align*}
		Now, from the hypotheses, 
		\begin{align*}
			\max\limits_{z\in\Gamma_2}| f^{(n)} (z)+ \lambda_0 g^{(n)} (z)| = |f^{(n)}(z_0) + \lambda_0 g^{(n)}(z_0)| < \max\limits_{z\in\Gamma_2}| f^{(n)}(z)|.
		\end{align*}
		Thus, $ f^{(n)} \not\perp_B g^{(n)} $ in any nowhere vanishing, point separating sub-algebra of $ \mathcal{H}ol(\Gamma_2\cup\Int(\Gamma_2))$ containing both the functions. 
	\end{proof}

\end{document}